\newcommand{\LC}{\left(}
\newcommand{\RC}{\right)}
\theoremstyle{plain}
\newtheorem{thm}{Theorem}[section]
\newtheorem{prop}{Proposition}[section]
\newtheorem{rmk}[prop]{Remark}
\numberwithin{equation}{section}
\newcommand {\R} {\mathbb{R}} 
 \newcommand {\N} {\mathbb{N}}
\newcommand {\p} {\partial}
\newcommand{\eps}{\epsilon}
\newcommand {\supp} {\text{supp}}
\newcommand{\norm}[1]{\lVert #1 \rVert}
\newcommand{\wt}{\widetilde}
\newcommand{\kommentar}[1]{}
\title[Inverse Problems for Fractional PDEs]{Inverse problems for fractional equations with a minimal number of measurements}
\author[Y.-H. Lin]{Yi-Hsuan Lin}
\address{Department of Applied Mathematics, National Yang Ming Chiao Tung University, Hsinchu, Taiwan}
\email{yihsuanlin3@gmail.com}
\author[H. Liu]{Hongyu Liu}
\address{Department of Mathematics, City University of Hong Kong, Kowloon, Hong Kong SAR, China}
\email{hongyu.liuip@gmail.com, hongyliu@cityu.edu.hk}
\begin{document}
	
	\maketitle
	
	\begin{abstract}
	
	In this paper, we study several inverse problems associated with a fractional differential equation of the following form:
	\[
	(-\Delta)^s u(x)+\sum_{k=0}^N a^{(k)}(x) [u(x)]^k=0,\ \ 0<s<1,\ N\in\mathbb{N}\cup\{0\}\cup\{\infty\},
	\]
	which is given in a bounded domain $\Omega\subset\mathbb{R}^n$, $n\geq 1$. For any finite $N$, we show that $a^{(k)}(x)$, $k=0,1,\ldots, N$, can be uniquely determined by $N+1$ different pairs of Cauchy data in $\Omega_e:=\mathbb{R}^n\backslash\overline{\Omega}$. If $N=\infty$, the uniqueness result is established by using infinitely many pairs of Cauchy data. The results are highly intriguing in that it generally does not hold true in the local case, namely $s=1$, even for the simplest case when $N=0$, a fortiori $N\geq 1$. The nonlocality plays a key role in establishing the uniqueness result. We also establish several other unique determination results by making use of a minimal number of measurements. Moreover, in the process we derive a novel comparison principle for nonlinear fractional differential equations as a significant byproduct.  	
		
		\medskip
		
		\noindent{\bf Keywords.}  Fractional differential equations, semilinear, inverse problems, minimal number of measurements, comparison principle.  
		
		
	\end{abstract}

    \tableofcontents

	\section{Introduction}
	
	\subsection{Mathematical setup and statement of main results}
	
	Let $\Omega\subset\mathbb{R}^n$, $n\geq 1$, be bounded domain with a $C^{1,1}$ boundary $\partial\Omega$ and $\Omega_e:=\mathbb{R}^n\backslash\overline{\Omega}$. Let $s\in (0, 1)$. Consider the following fractional differential equation:
	\begin{align}\label{main equation fractional}
		\begin{cases}
			(-\Delta)^s u +\mathbf{a}(x,u) =F &\text{ in }\ \Omega, \\
			u=f &\text{ in }\ \Omega_e,
		\end{cases}
	\end{align}
	where $F\in L^\infty(\Omega)$ and $f\in C^{2,s}(\Omega_e)$. The function $\mathbf{a}(x, u)$, $(x, u)\in\Omega\times\mathbb{R}$, takes the following form:
	\begin{align}\label{condition a polynomial}
	\mathbf{a}(x,u)=\sum_{k=1}^N a^{(k)}(x)u^k,\ \ a^{(k)}(x)\in C^{s}(\overline\Omega), \ N\in\mathbb{N}\cup\{\infty\}. 
         \end{align}
	Here, two remarks are in order. First, when $N=\infty$, we need to require the convergence of the infinite series in \eqref{condition a polynomial} in the $C^s(\overline{\Omega})$-topology for $u$ sufficiently small. It shall be made explicitly in what follows. Second, one can conveniently write $a^{(0)}(x):=-F(x)$ and set
	\begin{equation}\label{eq:nl1}
	\widetilde{\mathbf{a}}(x, u):=\sum_{k=0}^N a^{(k)}(x)u^k=\mathbf{a}(x, u)-F(x). 
	\end{equation}
	In doing so, the fractional equation in \eqref{main equation fractional} can be more compactly written as $(-\Delta)^s u+\widetilde{\mathbf{a}}(x, u)=0$. Nevertheless, since the regularity assumptions on $a^{(k)}(x)$, $k\geq 1$ and $a^{(0)}(x)=F(x)$ are different and moreover, in the physical setup, $F$ is usually referred to as a source term and $a^{{(k)}}(x)$'s are medium parameters, we stick to the formulation in \eqref{main equation fractional}-\eqref{condition a polynomial} in our subsequent study. 
	
In \eqref{main equation fractional}, $(-\Delta)^s$ is the fractional Laplacian operator given by (cf. \cite{di2012hitchhiks})
	\begin{align}\label{fractional Laplacian}
		(-\Delta)^s u(x):=c_{n,s}\mathrm{P.V.}\int_{\R^n} \frac{u(x)-u(y)}{|x-y|^{n+2s}}\, dy,
	\end{align}
	where 
	\begin{align*}
		c_{n,s}=\frac{\Gamma(\frac{n}{2}+s)}{|\Gamma(-s)|}\frac{4^{s}}{\pi^{n/2}}
	\end{align*}
	is a dimensional constant depending on $n\in \N$, $s\in (0,1)$, and P.V. stands for the principal value. Let $W_1$ and $W_2$ be two arbitrary nonempty open subsets in $\Omega_e$. It is always assumed that $\mathrm{supp}(f)\subset W_1$, and moreover $f|_{W_1}\in C^{2,s}(\overline{W_1})$. In Section~\ref{Sec 2}, we shall establish the well-posedness of the forward problem \eqref{main equation fractional}, especially the unique existence of a solution $u\in C^s(\mathbb{R}^n)$ under general conditions. Assuming such a well-posedness, we introduce the following exterior nonlocal partial Cauchy data set:  
    \begin{equation}\label{Cauchy data}
    \begin{split}
    	\mathcal{C}_{\mathbf{a},F}^{W_1,W_2} (f):=& \LC \left. u \right|_{W_1},  \left. (-\Delta)^s u \right|_{W_2} \RC\\
	=& \LC \left. f\right|_{W_1},  \left. (-\Delta)^s u \right|_{W_2} \RC\\
	\subset & \LC C^{2,s}_0(W_1) \cap \wt H^s(W_1)\RC \times C^s(W_2) , 
    \end{split}
    \end{equation}
where $u\in C^s(\mathbb{R}^n)$ is the unique solution to \eqref{main equation fractional}. Here $C^{2,s}_0$ and $\wt H^s$ are the H\"older space and fractional Sobolev space, which will be introduced in Section \ref{Sec 2}.

In what follows, we shall simply call $\mathcal{C}_{\mathbf{a},F}^{W_1,W_2} (f)$ the pair of Cauchy data associated with an external input $f$. For a particular case with $f\equiv 0$, 
\begin{align*}
		\mathcal{C}_{\mathbf{a},F}^0:=\mathcal{C}_{\mathbf{a},F}^{W_1, W_2}(0)=\LC 0|_{W_1},  \left. (-\Delta)^s u  \right|_{W_2} \RC
	\end{align*}
	is purely generated by the source term $F$. In the physical setup, $\mathcal{C}_{a, F}^0$ is referred to as the {\it passive measurement}. In comparison, for a nontrivial input $f$, $\mathcal{C}_{\mathbf{a},F}^{W_1,W_2} (f)$ is referred to as an {\it active measurement}. In this article, we are concerned with the following inverse problem:
	
	\medskip
	
	\noindent \textbf{Inverse Problem.} Can one determine $\mathbf{a}(x,u)$ of the form \eqref{condition a polynomial}, namely $a^{(k)}(x)$, $k\geq 1$, and $F(x)$ by using the Cauchy data $\mathcal{C}_{\mathbf{a},F}^{W_1,W_2}(f)$ associated with different inputs $f$?
	
	\medskip
	
The first main result that we establish for the above inverse problem is the unique determination of $F(x)$ and $a^{(k)}(x)$, $k=1, 2, \ldots, N$, by using $N+1$ pairs of Cauchy data $\mathcal{C}_{\mathbf{a},F}^{W_1,W_2} (f_k)$ associated with $f_0, f_1,\ldots, f_N$, where $f_k\equiv\hspace*{-3mm}\backslash\, f_l$, $0\leq k, l\leq N$ and $k\neq l$. Here, by $f_k\equiv\hspace*{-4mm}\backslash\, f_l$ we mean that there exists at least one point $x\in W_1$ such that $f_k(x)\neq f_l(x)$, and hence by the continuity, $f_k$ and $f_l$ take different values in an open subset of $W_1$. This represents a minimal number of measurements in the sense that the number of unknown functions equals to the number of the measured Cauchy data pairs, being both $N+1$. The unique determination result can be stated as follows. 

\begin{thm}\label{Main Thm 1}
		Let $\Omega \subset\R^n$ be a bounded domain with a $C^{1,1}$ boundary $\p \Omega$, for $n\geq 1$, and $s\in (0,1)$.  Let $W_1,W_2 \subset \Omega_e$ be two arbitrary nonempty open subsets, and consider 
		\begin{align}\label{equation in thm fractional}
			\begin{cases}
			(-\Delta)^s u_j + \mathbf{a}_j(x,u_j)= F_j & \text{ in }\ \Omega,\\
				u_j=f &\text{ in }\ \Omega_e,
			\end{cases}
		\end{align}
		where $\mathbf{a}_j=\mathbf{a}_j(x,u)$ is of the form \eqref{condition a polynomial} with coefficient functions $a_j^{(k)}$, for $k=1,2,\ldots, N$, $j=1,2$ and a finite $N\in\mathbb{N}$. Assuming the well-posedness of \eqref{equation in thm fractional},  if
		\begin{align}\label{Cauchy fractional 1}
			\mathcal{C}^{W_1,W_2}_{\mathbf{a}_1,F_1}(f_k)=\mathcal{C}^{W_1,W_2}_{\mathbf{a}_2,F_2}(f_k),\ k=0,1,\ldots, N,
		\end{align}
where $f_k\equiv\hspace*{-3.3mm}\backslash\, f_l$, $0\leq k, l\leq N$ and $k\neq l$, then one has
	    \[
	    a_1^{(k)}(x)=a_2^{(k)}(x) \text{ in }\Omega,\ k=1,2,\ldots, N, \quad \text{ and }\quad F_1=F_2 \text{ in }\Omega.
	    \]
	\end{thm}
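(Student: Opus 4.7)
The strategy deploys the strong unique continuation property (UCP) for $(-\Delta)^s$ twice. Write $u_{j,k}\in C^s(\mathbb{R}^n)$ for the solution of \eqref{equation in thm fractional} with nonlinearity $\mathbf{a}_j$, source $F_j$, and exterior datum $f_k$. First I would collapse the two families of solutions onto a single common family, reducing the inverse problem to a polynomial identity; then I would disentangle the unknown coefficients via a Vandermonde argument, invoking UCP a second time to guarantee non-degeneracy of the Vandermonde system at sufficiently many points of $\Omega$.

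For the collapse step, observe that $w_k:=u_{1,k}-u_{2,k}$ vanishes on $W_2\subset\Omega_e$ (both $u_{j,k}$ equal $f_k$ in the exterior), while \eqref{Cauchy fractional 1} forces $(-\Delta)^s w_k|_{W_2}=0$; the UCP for $(-\Delta)^s$ then yields $w_k\equiv 0$ in $\mathbb{R}^n$, so $u_{1,k}=u_{2,k}=:u_k$ for every $k$. Subtracting the two resulting equations in $\Omega$ produces
\[
\sum_{j=1}^N b^{(j)}(x)\,[u_k(x)]^j=G(x),\qquad k=0,1,\ldots,N,
\]
where $b^{(j)}:=a_1^{(j)}-a_2^{(j)}\in C^s(\overline{\Omega})$ and $G:=F_1-F_2$. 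Next, fix a pair $k\neq l$ and set $w_{kl}:=u_k-u_l$. From the equations satisfied by $u_k$ and $u_l$ (say with $\mathbf{a}_1$ and $F_1$) one reads off the linear equation $(-\Delta)^s w_{kl}+c_{kl}(x)w_{kl}=0$ in $\Omega$ with bounded potential
\[
c_{kl}(x)=\sum_{j=1}^N a_1^{(j)}(x)\bigl(u_k^{j-1}+u_k^{j-2}u_l+\cdots+u_l^{j-1}\bigr),
\]
together with the exterior trace $w_{kl}|_{\Omega_e}=f_k-f_l\not\equiv 0$. If $w_{kl}$ vanished on a non-empty open $U\subset\Omega$, then $(-\Delta)^s w_{kl}=-c_{kl}w_{kl}=0$ on $U$ as well, so UCP would force $w_{kl}\equiv 0$ in $\mathbb{R}^n$, contradicting $f_k\not\equiv f_l$. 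Hence the set $V:=\{x\in\Omega:u_0(x),\ldots,u_N(x)\text{ are pairwise distinct}\}$, being a finite intersection of open dense subsets of $\Omega$, is itself open and dense.

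On $V$, the Vandermonde matrix with rows $(1,u_k(x),u_k(x)^2,\ldots,u_k(x)^N)$ for $k=0,\ldots,N$ is non-singular, so the $N+1$ polynomial identities force $b^{(j)}(x)=0$ and $G(x)=0$ at every $x\in V$. Continuity of the $b^{(j)}$'s together with the identity $G=\sum_{j=1}^N b^{(j)}u_0^j$ propagates the vanishing to all of $\Omega$, yielding the theorem. The principal technical hurdle is the correct deployment of UCP in both steps: one must verify that $w_k$ and $w_{kl}$ possess the regularity required by the particular UCP statement in use (supplied by the well-posedness results of Section~\ref{Sec 2}), and one must confirm that the weak condition $f_k\not\equiv f_l$ suffices to rule out the degenerate scenario in which the Vandermonde argument fails. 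The sharp count $N+1$ is structural: one has exactly $N+1$ scalar unknowns $\{b^{(1)},\ldots,b^{(N)},G\}$ and correspondingly exactly $N+1$ independent exterior inputs to match.
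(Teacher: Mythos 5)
Your proposal is correct and follows essentially the same route as the paper: collapse the two solution families via the strong unique continuation property, reduce to a Vandermonde system in the coefficient differences, and use UCP a second time (applied to $u_k-u_l$, which solves a linear fractional Schr\"odinger equation with the divided-difference potential) to show the Vandermonde determinant is nonzero on a dense subset of $\Omega$. The only cosmetic difference is that you keep $G=F_1-F_2$ as an $(N+1)$-st unknown in an $(N+1)\times(N+1)$ Vandermonde system, whereas the paper first eliminates $F_j$ by subtracting the $\ell=0$ equation and then reduces its $N\times N$ matrix to the same Vandermonde determinant.
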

	
	\begin{rmk}
	In Theorem~\ref{Main Thm 1}, the well-posedness of the forward problem \eqref{equation in thm fractional} is assumed. This shall be proved in Section~\ref{Sec 2}. It turns out that we need to impose more restrictive assumptions including $a^{(1)}\geq 0$, $x\in\Omega$, and  additional smallness conditions on $f$ and $F$, especially when $N\geq 2$. On the other hand, if $N\leq 1$, i.e. the forward problem \eqref{equation in thm fractional} is linear, the regularity assumptions on $\partial\Omega, F, a^{(1)}$ and $f$ can be much relaxed. This shall also be remarked in Section~\ref{Sec 2}. It is emphasized that those conditions are only needed for the forward problem and will not affect the inverse problem study as long as the well-posedness of the forward problem is guaranteed. This should be more clear in our subsequent analysis. 
	\end{rmk}
	
Next, we consider passing $N$ to $\infty$ in Theorem~\ref{Main Thm 1}. It is natural to conjecture that one can establish the unique determination result by countably many exterior measurements, namely $\mathcal{C}_{\mathbf{a},F}^{W_1,W_2}(f_j), j\in\mathbb{N}$. However, establishing such a uniqueness result is fraught with significant difficulties. This is not unusual when one passes from finite unknowns to infinite unknowns in the study of inverse problems due to their ill-posed nature; see e.g. \cite{blaasten2016corners,liu2019mosco,KLW2021calder,harrach2020monotonicity}. Instead, we prove a unique determination result by using uncountably many measurements in the case $N=\infty$. This corroborates the sharpness of our result in Theorem~\ref{Main Thm 1}. We shall make more discussion about this point in the next subsection. Before that, we make more rigorous about the convergence of the infinite series in \eqref{condition a polynomial} when $N=\infty$. In such a case, we always assume that the infinite series is absolutely convergent in $C^s(\overline{\Omega})$-topology when $|u|<\varepsilon$ for a sufficiently small $\varepsilon\in\mathbb{R}_+$. This in fact means that $\mathbf{a}(\cdot, u)$ is real analytic around $u=0$ with values in $C^s(\overline\Omega)$. Moreover, throughout this paper, we shall always assume in such a case that 
\begin{align}\label{condition a positive}
	a^{(1)}(x)=\partial_u\mathbf{a}(x, 0)\geq 0   \text{ for }x\in \Omega,
\end{align}
again for the sake of guaranteeing the well-posedness of the forward problem. 

\begin{thm}\label{Main Thm 2}
Let $\Omega \subset\R^n$ be a bounded domain with a $C^{1,1}$ boundary $\p \Omega$, for $n\geq 1$, and $s\in (0,1)$. Let $X_\varepsilon$ and $Y_\delta$ be two function spaces that will be introduced by \eqref{local well-posedness source} and \eqref{local well-posedness exterior}, respectively. Suppose that $F_j\in X_\varepsilon$ and $f\in Y_\delta$.  Consider the following semilinear equation:
		\begin{align}\label{equation in thm4}
			\begin{cases}
			(-\Delta)^s u_j + \mathbf{a}_j(x,u_j)= F_j & \text{ in }\ \Omega,\\
				u_j=f &\text{ in }\ \Omega_e,
			\end{cases}
		\end{align}
		where $\mathbf{a}_j=\mathbf{a}_j(x,u)$ is of the form \eqref{condition a polynomial} with $N=\infty$ and satisfying the assumptions described above. 

 Let $W_1,W_2 \subset \Omega_e$ be two arbitrary nonempty open subsets, and $\mathcal{C}_{\mathbf{a}_j,F_j}^{W_1,W_2}$ be the associated Cauchy data of \eqref{equation in thm4}, for $j=1,2$. If it holds that 
	\begin{align}\label{Cauchy fractional 2}
		\mathcal{C}_{\mathbf{a}_1,F_1}^{W_1,W_2}(f)=\mathcal{C}_{\mathbf{a}_2,F_2}^{W_1,W_2}(f), \text{ for any }f\in Y_\delta,
	\end{align}
	then one has
	\[
	\mathbf{a}_1=\mathbf{a}_2\ \text{ in }\ \Omega\times \R, \quad \text{ and } \quad F_1=F_2\ \text{ in }\ \Omega.
	\]
\end{thm}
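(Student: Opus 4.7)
The plan rests on two pillars: the global unique continuation property (UCP) for the fractional Laplacian and the real analyticity of $\mathbf{a}_j(x,\cdot)$ near $u=0$. First, fix any $f \in Y_\delta$ and let $u_j$ be the corresponding solutions of \eqref{equation in thm4}. Both equal $f$ on $\Omega_e$, so $w := u_1 - u_2$ vanishes on $\Omega_e \supset W_2$, while \eqref{Cauchy fractional 2} forces $(-\Delta)^s w \equiv 0$ on $W_2$. The nonlocal UCP then gives $w \equiv 0$ in $\mathbb{R}^n$, so for \emph{every} $f \in Y_\delta$ the two problems share a common solution $u_f$. Subtracting the two equations in \eqref{equation in thm4} yields the pointwise identity
\begin{equation*}
b(x, u_f(x)) = \phi(x), \quad x \in \Omega, \ f \in Y_\delta,
\end{equation*}
where $\phi := F_1 - F_2$ and $b(x,t) := \sum_{k=1}^\infty (a_1^{(k)}(x) - a_2^{(k)}(x))\, t^k$ is real analytic in $t$ on an interval $I_0$ about $0$ with $b(x,0) = 0$.

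Next I would show that for each $x_0 \in \Omega$ the set $\{u_f(x_0) : f \in Y_\delta\}$ contains an open interval about $u_0(x_0)$, where $u_0$ denotes the solution corresponding to $f = 0$. For a suitable $g$ and $|\tau|$ small enough that $f_\tau := \tau g \in Y_\delta$, the well-posedness theory of Section~\ref{Sec 2} produces an analytic curve $\tau \mapsto u_{f_\tau}$ with
\begin{equation*}
u_{f_\tau}(x_0) = u_0(x_0) + \tau\, v_1(x_0) + O(\tau^2),
\end{equation*}
where $v_1$ solves the linearized fractional Schrödinger problem $(-\Delta)^s v_1 + q\, v_1 = 0$ in $\Omega$, $v_1 = g$ in $\Omega_e$, with potential $q(x) := \partial_u \mathbf{a}_1(x, u_0(x))$. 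Whenever $g$ is chosen so that $v_1(x_0) \ne 0$, the image of this curve is an open interval about $u_0(x_0)$, as required.

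Combining the two steps, for each $x_0 \in \Omega$ the analytic function $t \mapsto b(x_0, t) - \phi(x_0)$ vanishes on an open subset of $I_0$, hence identically on $I_0$. Evaluating at $t = 0$ gives $\phi(x_0) = 0$, so $F_1 \equiv F_2$; matching the remaining Taylor coefficients in $t$ then yields $a_1^{(k)}(x_0) = a_2^{(k)}(x_0)$ for every $k \geq 1$, and hence $\mathbf{a}_1 \equiv \mathbf{a}_2$. The main technical obstacle is producing, for every prescribed $x_0 \in \Omega$, an admissible $g$ with $v_1(x_0) \ne 0$. I expect to resolve this either via a fractional strong maximum principle --- using \eqref{condition a positive} together with the smallness of $u_0$ to ensure $q \geq 0$, so that any nontrivial nonnegative $g$ produces $v_1 > 0$ in $\Omega$ --- or via the Runge approximation property for the linearized fractional Schrödinger equation combined with UCP, ruling out the pathological possibility that $v_1(x_0) = 0$ for all admissible $g$.
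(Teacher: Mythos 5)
Your route is genuinely different from the paper's. The paper runs a successive (higher-order) linearization in the exterior data $\eps_1 g_1+\eps_2 g_2$: the first-order step invokes the single-measurement potential determination of \cite{GRSU18} to identify $\p_u\mathbf{a}_1(x,u_1^{(0)})=\p_u\mathbf{a}_2(x,u_2^{(0)})$, each subsequent order produces an integral identity of the form \eqref{second integral id} which is then exploited via the Runge approximation (Proposition \ref{Prop:(Runge-approximation-property)}), and an induction plus analyticity closes the argument. You instead apply the UCP once per datum to get $u_1=u_2=u_f$, subtract the two equations to obtain the pointwise identity $b(x,u_f(x))=\phi(x)$, and reduce everything to the one-dimensional identity theorem for the analytic function $t\mapsto b(x_0,t)$, provided the attainable values $\{u_f(x_0)\}$ fill an interval. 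This is cleaner in that it avoids all integral identities and the induction, and recovers $F$ and every $a^{(k)}$ in one stroke; the paper's scheme, by contrast, only needs the Runge property order by order and never requires pointwise nonvanishing of the solution at a prescribed point.

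That last requirement is where your argument has a gap, and neither of your proposed fixes works as stated. The maximum-principle route needs the linearized potential $q=\p_u\mathbf{a}_1(\cdot,u_0)\geq 0$, but \eqref{condition a positive} only gives $a^{(1)}=\p_u\mathbf{a}(\cdot,0)\geq 0$; since $F$ need not vanish, $u_0\not\equiv 0$ and $q=a^{(1)}+2a^{(2)}u_0+\cdots$ can be negative where $a^{(1)}$ vanishes. The Runge route gives density of the linearized solutions only in $L^2(\Omega)$, which does not control the evaluation functional $v\mapsto v(x_0)$ (interior Schauder estimates for $(-\Delta)^s$ carry a nonlocal tail term that is not bounded by $\|v\|_{L^2(\Omega)}$ alone). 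The repair is to weaken your claim from ``every $x_0$'' to ``a.e.\ $x_0$'': fix one nontrivial $g\geq 0$ supported in $W_1$ and note that if $v_1=0$ on a subset $E\subset\Omega$ of positive measure, then the equation forces $(-\Delta)^s v_1=-qv_1=0$ on $E$ as well, so Proposition \ref{Prop:(Uniqueness-theorem)} gives $v_1\equiv 0$, contradicting $v_1|_{\Omega_e}=g\not\equiv 0$. Hence $v_1\neq 0$ a.e., your interval argument applies at a.e.\ $x_0$, and the $C^s(\overline\Omega)$-continuity of the $a_j^{(k)}$ upgrades the resulting a.e.\ identities to identities on all of $\Omega$ (for $F_j\in L^\infty$, a.e.\ equality is the assertion). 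This is in fact the same device the paper relies on when it asserts that differences of solutions with distinct exterior data are nonzero a.e. With that modification your proof goes through.
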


It is noted that in \eqref{Cauchy fractional 2}, we need to make use of the Cauchy data for all the inputs from the function space $Y_\delta$. In fact, the inputs depend on an asymptotic parameter $\delta$, which means that we need to make use of uncountably many pairs of Cauchy data in Theorem~\ref{Main Thm 2}. It is noted that if $F_1=F_2=0$, the unique determination result has been proved in \cite{LL2020inverse}. We shall follow a similar successive linearization technique in deriving the uniqueness result. As mentioned earlier, we present such a unique determination result mainly to corroborate the novelty and significance of the result in Theorem~\ref{Main Thm 1} as well as its argument which make full use of the finite dimensionality and nonlocality of the underlying problem. 

Finally, it would be interesting to consider the same inverse problem in Theorem~\ref{Main Thm 1}, but with a reduced number of unknown functions. To illustrate, let us consider a specific case by assuming that $N=2$ in Theorem~\ref{Main Thm 1}, but $a_1^{(1)}=a_2^{(1)}=0$. That is, for the inverse problem of recovering $\mathbf{a}$ and $F$ by knowledge of $\mathcal{C}_{\mathbf{a}, F}^{W_1, W_2}$, it is sufficient to recover $a^{(2)}$ and $F$ since $a^{(1)}=0$ is a-priori known. Clearly, by Theorem~\ref{Main Thm 1}, one can obtain the unique determination by using 3 different pairs of Cauchy data. However, since the number of unknown functions is reduced to be 2, it is natural to ask whether one can achieve the uniqueness result by 2 measurements, namely 2 Cauchy pairs. We shall give an affirmative answer to such a specific case.

\begin{thm}\label{Thm: qudratic}
Let $\Omega \subset\R^n$ be a bounded domain with a $C^{1,1}$ boundary $\p \Omega$, for $n\geq 1$, and $s\in (0,1)$. Let $X_\varepsilon$ and $Y_\delta$ be the same function spaces in Theorem~\ref{Main Thm 2}. Let $a_j^{(2)}\in C^s(\overline{\Omega})$ be nonnegative functions and $F_j\in X_\varepsilon$ be nonpositive sources, for $j=1,2$. Consider the following semilinear equation:  
 	\begin{align}\label{equation in thm 3}
 		\begin{cases}
 			(-\Delta)^s u_j+a_j^{(2)}u_j^2= F_j & \text{ in }\ \Omega,\\
 			u_j=f &\text{ in }\ \Omega_e,
 		\end{cases}
 	\end{align}
 	for $j=1,2$. Let $W_1,W_2\subset \Omega_e$ be two arbitrarily nonempty open subsets, and $\mathcal{C}_{a_j^{(2)},F_j}^{W_1,W_2}$ be the Cauchy data of \eqref{equation in thm 3}, for $j=1,2$. Let $f\in Y_\delta \setminus \{0\}$ be a nonpositive function with $\supp\, (f)\subset W_1$. If it holds that   
 	\begin{align}\label{DN fractional 3}
 		\mathcal{C}^0_{a_1^{(2)},F_1}=\mathcal{C}^0_{a_{2}^{(2)},F_2} \quad \text{ and }\quad \mathcal{C}_{a_1^{(2)},F_1}^{W_1,W_2}(f)=\mathcal{C}_{a_2^{(2)},F_2}^{W_1,W_2}(f),
 	\end{align}
then one has 
 	\[
 	a_1^{(1)}=a_2^{(2)} \quad  \text{ and } \quad  F_1=F_2\ \text{ in }\ \Omega.
 	\]
 \end{thm}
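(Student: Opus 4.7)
The plan is to use the strong unique continuation property (SUCP) of $(-\Delta)^s$ twice, once for each of the two hypotheses in \eqref{DN fractional 3}, to reduce each to a pointwise algebraic identity relating $F_1-F_2$ and $a_1^{(2)}-a_2^{(2)}$, and then to eliminate these against each other by exploiting a strict comparison between the ``passive'' solution (driven only by the source with $f\equiv 0$) and the ``active'' solution (driven additionally by the nontrivial nonpositive exterior datum $f$). The sign hypotheses on $F_j$, $a_j^{(2)}$ and $f$ are exactly what make this last step possible.

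First I would handle the passive datum. Let $u_j^0\in C^s(\mathbb{R}^n)$ solve \eqref{equation in thm 3} with $f\equiv 0$, so that $u_j^0 = 0$ in $\Omega_e$. The identity $\mathcal C^0_{a_1^{(2)},F_1}=\mathcal C^0_{a_2^{(2)},F_2}$ gives $(-\Delta)^s u_1^0 = (-\Delta)^s u_2^0$ on $W_2$; combined with $(u_1^0-u_2^0)\equiv 0$ on the open set $W_2\subset\Omega_e$, the Ghosh--Salo--Uhlmann SUCP forces $u_1^0 \equiv u_2^0 =: u^0$ on $\mathbb{R}^n$, hence
\[
F_1 - F_2 = (a_1^{(2)}-a_2^{(2)})(u^0)^2 \quad \text{in } \Omega. \qquad (\star)
\]
Applying the same SUCP argument to $u_1-u_2$ (which vanishes on $\Omega_e$ since both solutions equal $f$ there, and whose fractional Laplacian agrees on $W_2$ by the second identity in \eqref{DN fractional 3}) yields $u_1 \equiv u_2 =: u$ on $\mathbb{R}^n$ and
\[
F_1 - F_2 = (a_1^{(2)}-a_2^{(2)})\, u^2 \quad \text{in } \Omega. \qquad (\star\star)
\]
Subtracting $(\star)$ from $(\star\star)$ and factoring the resulting difference of squares, one gets
\[
(a_1^{(2)}-a_2^{(2)})(u-u^0)(u+u^0)=0 \quad \text{in }\Omega.
\]

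It therefore suffices to prove that $(u-u^0)(u+u^0)>0$ pointwise in $\Omega$, and this is where the novel nonlinear comparison principle advertised in the abstract must enter. Because $F\le 0$, $a^{(2)}\ge 0$, and the exterior data are $\le 0$, a nonlocal maximum principle first yields $u^0 \le 0$ on $\mathbb{R}^n$ and the weak comparison $u\le u^0$ in $\Omega$; since $f\not\equiv 0$ and $f\le 0$, a Hopf-type strong comparison for the semilinear fractional equation upgrades this to $u(x)<u^0(x)\le 0$ at \emph{every} $x\in\Omega$. Both factors $u-u^0$ and $u+u^0$ are then strictly negative on $\Omega$, so their product is strictly positive, which forces $a_1^{(2)}=a_2^{(2)}$ in $\Omega$, and then $(\star)$ delivers $F_1=F_2$ in $\Omega$. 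The main obstacle is precisely this last strict comparison step: writing the difference equation as the linear fractional Schr\"odinger problem $(-\Delta)^s (u-u^0) + a^{(2)}(u+u^0)(u-u^0)=0$ in $\Omega$ with exterior datum $f$, the zeroth-order coefficient $a^{(2)}(u+u^0)$ is \emph{nonpositive}, so the required maximum principle is not of the standard sign-definite type and must be extracted from the smallness encoded in $X_\varepsilon$ and $Y_\delta$ together with the strict sign of $f$; expect this to be the content of the novel comparison principle alluded to in the introduction.
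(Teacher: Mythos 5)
Your reduction is exactly the paper's up to the factored identity: two applications of the strong unique continuation property (Proposition \ref{Prop:(Uniqueness-theorem)}) identify $u_1\equiv u_2=:u$ and $u_1^{(0)}\equiv u_2^{(0)}=:u^0$ in $\R^n$, and subtracting the two equations for each index gives
\[
\bigl(a_1^{(2)}-a_2^{(2)}\bigr)\bigl(u-u^0\bigr)\bigl(u+u^0\bigr)=0 \quad \text{in }\Omega .
\]
The divergence, and the genuine gap, is in how you dispose of the factor $(u-u^0)(u+u^0)$. You assert a weak comparison $u\le u^0$ in $\Omega$ and then a Hopf-type strict comparison $u<u^0$ everywhere, and you defer both to ``the novel comparison principle'' of the paper. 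But the comparison principle the paper actually proves (Proposition \ref{Thm Comparison}) is not a comparison between two solutions with different exterior data; it only compares a single solution with zero: if $\mathbf{b}(x,u)\ge 0$, $F\le 0$ and $f\le 0$, then $u\le 0$. It says nothing about monotone dependence on $f$. As you yourself note, the difference $w=u-u^0$ solves a linear fractional Schr\"odinger equation whose potential $a^{(2)}(u+u^0)$ is nonpositive, so your strict ordering does not follow from any maximum principle available in the paper; it would require a separate argument (e.g.\ a maximum principle for small sign-indefinite potentials, using the smallness built into $X_\varepsilon$ and $Y_\delta$, followed by the nonlocal strong maximum principle). As written, the load-bearing inequality $u<u^0\le 0$ on all of $\Omega$ is conjectural.

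The paper finishes with a weaker pair of claims that avoids the difference equation altogether: it applies Proposition \ref{Thm Comparison} \emph{separately} to $u$ and to $u^0$ (with $\mathbf{b}(x,u_j)=a_j^{(2)}u_j^2\ge 0$) to get $u\le 0$ and $u^0\le 0$, both nontrivial, whence $u+u^0$ does not vanish; and it obtains $u-u^0\not\equiv 0$ directly from unique continuation, since $u-u^0=f\not\equiv 0$ in $\Omega_e$. Together with the continuity of $a_j^{(2)}$ this forces $a_1^{(2)}=a_2^{(2)}$, and then your identity $(\star)$ gives $F_1=F_2$. So the skeleton of your argument is the right one, but you should either replace the strict-comparison step by these two separate sign/unique-continuation arguments, or actually supply the proof of the monotone comparison you invoke --- it is not what Proposition \ref{Thm Comparison} delivers.
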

 
 As can be seen from Theorem~\ref{Thm: qudratic}, the number of the unknown functions and the number of the Cauchy data pairs are equal to each other, being both 2. Hence, if the number of unknown functions is reduced in Theorem~\ref{Main Thm 1}, one in principle should be able to reduce the number of measurements to be minimal. Nevertheless, as can also be seen from Theorem~\ref{Thm: qudratic}, one needs to impose certain a-priori conditions on the unknowns. In fact, even in such a specific case, we need to develop a different argument from that for the proof of Theorem~\ref{Main Thm 1}. This partly evidences the significant challenge by deriving unique determination results using a minimal number of measurements. In principle, one can extend the result in Theorem~\ref{Thm: qudratic} by considering a general unknown function $\mathbf{a}(x, u)$ of the form \eqref{condition a polynomial} with some of its coefficient functions a-priori known. The proof can follow a similar argument to that for Theorem~\ref{Thm: qudratic} in Section~\ref{Sec 4}. However, presenting such a general result involves rather lengthy and tedious explanations, especially on the a-priori conditions imposed on the unknown coefficient functions. Hence, instead of being swamped with nontechnical details, we choose to present Theorem~\ref{Thm: qudratic} only to verify the point that if the number of unknown functions is reduced, one may still be able to reduce the number of measurements to be minimal, under the situation that certain a-priori conditions are imposed on the unknown functions.

\subsection{Discussion and connection to existing studies}

Inverse problems for fractional PDEs (partial differential equations), namely differential equations involving fractional PDOs (partial differential operators), have received considerable studies in the literature recently. The fractional type Calder\'on problem was first investigated in \cite{ghosh2016calder}. It turns out that one can solve several challenging inverse problems that still remain unsolved in local cases, namely the fractional PDOs are replaced by the corresponding non-fractional counterparts, by taking advantage of the nonlocality of the fractional PDEs. We refer to \cite{bhattacharyya2021inverse,GLX,CL2019determining,CLL2017simultaneously,cekic2020calderon,feizmohammadi2021fractional,harrach2017nonlocal-monotonicity,harrach2020monotonicity,GRSU18,GU2021calder,ghosh2021non,lin2020monotonicity,LL2020inverse,KLW2021calder,RS17,ruland2018exponential} and the references cited therein for the existing developments in the literature.

Most of the existing studies are concerned with inverse problems for linear fractional PDEs by using infinitely (actually uncountably) many measurements. In \cite{CLL2017simultaneously}, fractional Schiffer's problem was proved by using a single measurement. In \cite{GRSU18}, uniqueness was derived by a single measurement in determining a potential term associated with a fractional Schr\"odinger equation, which is actually a special case of Theorem~\ref{Main Thm 1} with $N=1$ and $F\equiv 0$. There are also related studies on inverse problems associated with semilinear fractional PDEs \cite{lai2019global,lin2020monotonicity,LL2020inverse}, all making use of uncountably many measurements. 

On the other hand, inverse problems for nonlinear PDEs enjoys a similar hotness as the fractional ones in the literature recently; see \cite{FO19,feizmohammadi2021inverse,harrach2022simultaneous,KU2019partial,KU2019remark,LLLS2019nonlinear,LLLS2019partial,LLST2020inverse,LL2020inverse,LLL2021determining,LLLZ2021simultaneous} and the references cited therein. A major tool is the so-called successive linearization or higher order linearization technique, which attaches some asymptotic parameters to the measurement data and in nature requires uncountably many measurements. It is noted that the aforementioned studies on inverse problems for fractional semilinear PDEs made use of successive linearization technique as well. 

Inverse problems with minimal/optimal measurement data have been a central topic in the theory of inverse problems, which possesses both theoretical and practical significance. On the other hand, it is always highly challenging to achieve unique determination results by using minimal/optimal measurement data. We mention the so-called Schiffer's problem \cite{blaasten2021scattering,liu2006uniqueness,liu2008uniqueness,liu2020local} and partial-data Caler\'on's problem \cite{kenig2007calderon,kenig2014calderon,imanuvilov2010calderon,bai2021effective} as typical and classical examples in the local setting. Hence, our results in Theorems~\ref{Main Thm 1} and \ref{Thm: qudratic} are highly intriguing. First, they make use of a minimal number of measurements as discussed before Theorem~\ref{Main Thm 1}. Second, in deriving those inversion results, we do not make use of any linearization technique, even dealing with nonlinear PDEs. The linearization is only employed for showing the well-posedness of the forward problem. Instead, we explore the nonlocality and make full use of it. Our study as well as the method developed in the present article open up a new direction of research for fractional inverse problems with minimal/optimal measurement data.

Finally, we would like to mention another interesting connection of our study. As mentioned earlier, $F$ in \eqref{main equation fractional} signifies a source term, whereas $a^{(k)}$'s in \eqref{condition a polynomial} usually signify certain medium parameters in the physical scenario. Inverse problems of simultaneously recovering an unknown source inside a body as well as the medium parameters of the body by using both passive and active measurements have also received considerable attentions in the literature. They usually possess strong backgrounds of applications; see \cite{liu2015determining} in photo- and thermo-acoustic tomography, \cite{deng2019identifying,deng2020identifying} in geomagnetic anomaly detection, \cite{KLU2018,LLL2021determining} in cosmological exploration, \cite{li2019determining,li2021determining} in quantum scattering as well as \cite{LLLZ2021simultaneous} for more related ones. In particular, we would like to mention that in \cite{CL2019determining} the authors consider the simultaneous recovery of an unknown source and its surrounding medium parameter associated with a linear fraction PDE by uncountably many measurements.

The rest of the paper is organized as follows. In Section \ref{Sec 2}, we review several function spaces, which were be used in our study of inverse  problems. Meanwhile, we also study the well-posedness for both linear and nonlinear fractional elliptic equations. We prove Theorems \ref{Main Thm 1}, when the function $\mathbf{a}(x,u)$ is linear with respect to the $u$-variable in Section \ref{Sec 3}. In Section \ref{Sec 4}, we prove Theorem \ref{Main Thm 1} without using any linearization method, and show that Theorem \ref{Main Thm 2} holds by using the higher order linearization.  Finally, we demonstrate a particular case of an inverse source problem with reduced unknowns, and we can determine both coefficients and source by using two measurements in Section \ref{Sec 5}.

 \section{Preliminaries}\label{Sec 2}
 
 In this section, we prove the well-posedness of the Dirichlet problem \eqref{main equation fractional}, under different settings of the function $\mathbf{a}=\mathbf{a}(x,u)$. Before doing so, let us review H\"older spaces and fractional Sobolev spaces.

 \subsection{Function spaces}
 
 Let us review definitions of several function spaces, which will be used in the proofs of our results. 
 Let $U\subset\R^n$ be an open set, $k\in \N \cup \{0\}$ and $0<\alpha <1$. The H\"older space $C^{k,\alpha}(U)$ is defined by
 \[
 C^{k,\alpha}(U):=\left\{f:U\to \R:\ \norm{f}_{C^{k,\alpha}(U)}<\infty \right\}.
 \]
 The norm $\norm{\cdot}_{C^{k,\alpha}(U)}$ is given by 
 \begin{align*}
 	\norm{f}_{C^{k,\alpha}(U)}:=\sum_{|\beta|\leq k}\norm{\p ^\beta f}_{L^\infty(U)} +\sum_{|\beta|=k}[\p ^\beta f]_{C^\alpha(U)},
 \end{align*}
 where 
 \[
 [\p ^\beta f]_{C^\alpha(U)}:=\sup_{\begin{subarray}{c}
 		x\neq y, \\
 		x,y\in U
 \end{subarray}}\frac{|\p ^\beta f(x)-\p^\beta f(y)|}{|x-y|^\alpha}
 \]
 denotes the seminorm of $C^{0,\alpha}(U)$, $\beta=(\beta_1,\ldots,\beta_n)$ is a multi-index with $\beta_i \in \N \cup \{0\}$ and $|\beta|=\beta_1 +\ldots +\beta_n$. 
 Furthermore, we also denote the space
 \[
 C_0^{k,\alpha}(U):=\text{closure of }C^\infty_c(U) \text{ in }C^{k,\alpha}(U).
 \]
Set $C^\alpha(U) \equiv C^{0,\alpha}(U)$ when $k=0$.

 We turn to recall the definition of fractional Sobolev spaces. Given $\gamma\in \R$, the $L^2$-based fractional Sobolev space $H^{\gamma}(\mathbb{R}^{n}):=W^{\gamma,2}(\mathbb{R}^{n})$ is defined via 
 \begin{equation}\notag
 	\|u\|_{H^{\gamma}(\mathbb{R}^{n})}=\LC \|u\|_{L^{2}(\mathbb{R}^{n})}^{2}+\|(-\Delta)^{\gamma/2}u\|_{L^{2}(\mathbb{R}^{n})}^{2}\RC^{1/2}.\label{eq:H^s norm}
 \end{equation}
 Furthermore, via the Parseval identity, the semi-norm $\|(-\Delta)^{\beta/2}u\|_{L^{2}(\mathbb{R}^{n})}$
 can be rewritten as 
 \[
 \|(-\Delta)^{\gamma/2}u\|_{L^{2}(\mathbb{R}^{n})}=\left((-\Delta)^{\gamma}u,u\right)_{\mathbb{R}^{n}}^{1/2},
 \]
 where $(-\Delta)^\gamma $ is the fractional Laplacian, for $\gamma\in \R$.
 
 Given an open set  $U\subset \R^n $ and $\gamma\in\mathbb{R}$,
 then we denote the following Sobolev spaces, 
 \begin{align*}
 	H^{\gamma}(U) & :=\left\{u|_{U}:\, u\in H^{\gamma}(\mathbb{R}^{n})\right\},\\
 	\widetilde{H}^{\gamma}(U) & :=\text{closure of \ensuremath{C_{c}^{\infty}(U)} in \ensuremath{H^{\gamma}(\mathbb{R}^{n})}},\\
 	H_{0}^{\gamma}(U) & :=\text{closure of \ensuremath{C_{c}^{\infty}(U)} in \ensuremath{H^{\gamma}(U)}},
 \end{align*}
 and 
 \[
 H_{\overline{U}}^{\gamma}:=\left\{u\in H^{\gamma}(\mathbb{R}^{n}):\,\mathrm{supp}(u)\subset\overline{U}\right\}.
 \]
 In addition, the fractional Sobolev space $H^{\gamma}(U)$ is complete with respect to the norm
 \[
 \|u\|_{H^{\gamma}(U)}:=\inf\left\{ \|v\|_{H^{\gamma}(\mathbb{R}^{n})}:\,v\in H^{\gamma}(\mathbb{R}^{n})\mbox{ and }v|_{U}=u\right\} .
 \]
 Moreover, when $U$ is a Lipschitz domain, the dual space of $H^\gamma(U)$ can be written as 
 \begin{align*}
 	\LC H^{\gamma}_{\overline{U}}(\R^n)\RC ^\ast = H^{-\gamma}(U), \quad \text{ and }\quad \LC H^{\gamma}(U)\RC^\ast=H^{-\gamma}_{\overline U}(\R^n).
 \end{align*}
 We also denote $\LC \wt H^{\gamma}(U)\RC^\ast$ to be the dual space of $\wt H^{\gamma}(U)$.  For those readers who are interested to know more properties for fractional Sobolev spaces, see \cite{di2012hitchhiks,mclean2000strongly} for more details.

 \subsection{Forward problems}
 
Let us begin with the linear case, i.e., 
$$
\mathbf{a}(x,u)=a^{(1)}(x)u
$$
in \eqref{main equation fractional} (as $N=1$), where $a^{(1)}\in L^\infty(\Omega)$, and the (global) well-posedness was shown in \cite[Lemma 2.3]{ghosh2016calder} by using the Lax-Milgram theorem. Consider the following eigenvalue condition 
 \begin{align}\label{eq:eigenvalue condition}
 	0 \text{ is not a Dirichlet eigenvalue of }(-\Delta)^s +a^{(1)} \text{ in }\Omega,
 \end{align}
 then the well-posedness for linear fractional Schr\"odinger equation holds.

 \begin{prop}[Well-posedness for the fractional Schr\"odinger equation]\label{Prop: Well-posedness}
 	Let $\Omega \subset \R^n$ be a bounded open set, for $n\geq 1$, and $a^{(1)}\in L^\infty(\Omega)$. Let $f\in H^s(\R^n)$ and $F\in \LC \wt H^s(\Omega) \RC ^\ast$. Suppose that \eqref{eq:eigenvalue condition} holds, then there exists a unique solution $u\in H^s(\R^n)$ of
 	\begin{align*}
 		\begin{cases}
 			\LC (-\Delta)^s +a^{(1)} \RC u=F &\text{ in }\Omega,\\
 			u=f &\text{ in }\Omega_e,
 		\end{cases}
 	\end{align*}
 	such that 
 	\[
 	\norm{u}_{H^s(\R^n)}\leq C \LC \norm{F}_{\LC \wt H^s(\Omega)\RC^\ast} +\norm{f}_{H^s(\R^n)} \RC, 
 	\]
 	for some constant $C>0$ independent of $u$, $f$, and $F$.
 \end{prop}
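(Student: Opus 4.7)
The plan is to reduce the Dirichlet problem to one with homogeneous exterior data and then apply the Fredholm alternative. First, I would use $f\in H^s(\R^n)$ to define the unknown $w:=u-f$, so that the problem becomes: find $w\in\wt H^s(\Omega)$ satisfying
\begin{equation*}
\LC (-\Delta)^s+a^{(1)} \RC w = \wt F \quad\text{in }\Omega,
\end{equation*}
where $\wt F:=F-\LC (-\Delta)^s+a^{(1)} \RC f\in \LC \wt H^s(\Omega)\RC^\ast$. This reduction uses that $(-\Delta)^s:H^s(\R^n)\to H^{-s}(\R^n)$ is bounded and that multiplication by $a^{(1)}\in L^\infty(\Omega)$ sends $L^2(\Omega)$ continuously into itself, hence into $\LC \wt H^s(\Omega)\RC^\ast$.

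Second, I introduce the symmetric bilinear form on $\wt H^s(\Omega)$,
\begin{equation*}
B(w,v):=\LC (-\Delta)^{s/2}w,(-\Delta)^{s/2}v \RC_{\R^n}+\int_\Omega a^{(1)}w\,v\dx,
\end{equation*}
so the weak problem reads $B(w,v)=\LA \wt F,v\RA$ for every $v\in\wt H^s(\Omega)$. The form $B$ is bounded, and the Poincar\'e-type inequality on $\wt H^s(\Omega)$ (available since $\Omega$ is bounded) gives $\norm{(-\Delta)^{s/2}w}_{L^2(\R^n)}^2\geq c_\Omega\norm{w}_{H^s(\R^n)}^2$, which together with $|a^{(1)}|\leq\norm{a^{(1)}}_{L^\infty(\Omega)}$ yields the G\aa rding inequality
\begin{equation*}
B(w,w)\geq c_1\norm{w}_{H^s(\R^n)}^2-c_2\norm{w}_{L^2(\Omega)}^2.
\end{equation*}

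Third, I invoke the compact embedding $\wt H^s(\Omega)\hookrightarrow L^2(\Omega)$ to rewrite the associated operator $T:\wt H^s(\Omega)\to \LC \wt H^s(\Omega)\RC^\ast$, $\LA Tw,v\RA:=B(w,v)$, as the sum of an isomorphism (from the principal part) and a compact operator (from the $a^{(1)}$ term via the compact inclusion). The Fredholm alternative then implies that $T$ is invertible if and only if it is injective. Injectivity of $T$ is precisely the eigenvalue hypothesis \eqref{eq:eigenvalue condition}, so we obtain unique solvability of the reduced problem, and hence of the original one by setting $u=w+f$.

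Finally, the quantitative estimate follows either from the bounded inverse theorem applied to $T^{-1}$, or from a standard compactness-contradiction argument: assuming the estimate fails, extract a sequence $w_k$ with $\norm{w_k}_{H^s(\R^n)}=1$ while $\wt F_k\to 0$ in $\LC \wt H^s(\Omega)\RC^\ast$; compactness produces an $L^2$-limit along a subsequence, the G\aa rding inequality upgrades this to $H^s$-convergence, and the limit would be a nontrivial Dirichlet eigenfunction at $0$, contradicting \eqref{eq:eigenvalue condition}. The main obstacle is that $a^{(1)}$ has no sign, so pure Lax--Milgram does not apply directly; the Fredholm alternative, enabled by the G\aa rding inequality and Rellich compactness, combined with the eigenvalue hypothesis, is the essential mechanism, and carefully justifying the Poincar\'e--G\aa rding estimate in the nonlocal setting on $\wt H^s(\Omega)$ is the most delicate technical point.
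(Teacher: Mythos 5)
Your argument is correct, and it is essentially the argument behind the result the paper relies on: the paper gives no proof of Proposition~\ref{Prop: Well-posedness} but cites \cite[Lemma 2.3]{ghosh2016calder}, whose proof is exactly your reduction to homogeneous exterior data, the fractional Poincar\'e/G\aa rding estimate on $\wt H^s(\Omega)$, Rellich compactness, and the Fredholm alternative with the eigenvalue condition \eqref{eq:eigenvalue condition} supplying injectivity and the bounded-inverse theorem supplying the norm estimate. The only minor point to flag is that for a general bounded open set $\Omega$ one should carry out the reduction in $H^s_{\overline{\Omega}}$ (the space of $H^s(\R^n)$ functions supported in $\overline{\Omega}$) rather than $\wt H^s(\Omega)$, since these spaces need not coincide without Lipschitz regularity of $\partial\Omega$; this does not affect any step of your argument.
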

 
 For the semilinear counterparts, we can prove the (local) well-posedness of the fractional elliptic equation \eqref{main equation fractional}, where $\mathbf{a}(x,u)$ is of the form \eqref{condition a polynomial}, whenever the Dirichlet data and sources are sufficiently small in appropriate function spaces.

 Let us introduce the following function spaces
 \begin{align}\label{local well-posedness source}
 	X_\varepsilon :=\left\{ F\in L^\infty(\Omega): \, \norm{F}_{L^\infty(\Omega)}<\varepsilon \right\}, 
 \end{align}
 \begin{align}\label{local well-posedness exterior}
 	\begin{split}
 		Y_\delta:= \left\{ f\in Y : \, \norm{f}_{Y} <\delta \right\},
 	\end{split}
 \end{align}  
 where 
 \[
 Y:=C^{2,s}_0(\Omega_e)\cap \wt H^s(\Omega_e)
 \]
 and  $\norm{\cdot}_Y$ is given by 
 $$
 \norm{f}_Y:=\norm{f}_{C^{2,s}_0(\Omega_e)}+ \norm{f}_{\wt H^s(\Omega_e)}.
 $$
 Given $s\in (0,1)$, let $\mathbf{a}(x,u)$ satisfy \eqref{condition a polynomial}, $F\in X_\varepsilon$ and $f\in Y_\delta$, we can prove that the local well-posedness of 
 \begin{align}\label{main equation_nonlinear quadratic}
 	\begin{cases}
 		(-\Delta)^s u +\mathbf{a}(x,u) = F & \text{ in }\Omega,\\
 		u=f &\text{ in }\Omega_e,
 	\end{cases}
 \end{align}
 for sufficiently small $\varepsilon, \delta>0$.
 Let us state the known well-posedness result for linear fractional Schr\"odinger equation without the proof.

 \begin{thm}[Local well-posedness for the fractional semilinear elliptic equation]\label{Thm local well-posedness}
 	Let $\Omega \subset\R^n$ be a bounded domain with $C^{1,1}$ boundary $\p \Omega$. Let $\mathbf{a}(x,u)$ satisfy  \eqref{condition a polynomial}, $F \in X_\epsilon$ and $f\in Y_\delta$, for sufficiently small $\varepsilon$ and $\delta$.	
 	Consider the fractional semilinear elliptic equation 
 	\begin{align}\label{eq well-posedness}
 		\begin{cases}
 			(-\Delta)^s u + \mathbf{a}(x,u)=F &\text{ in }\Omega,\\
 			u=f&\text{ in }\Omega_e.
 		\end{cases}
 	\end{align}
 	Then there exists a unique solution $u\in C^{s}(\R^n)$ of \eqref{eq well-posedness}. In addition, the solution $u$ satisfies 
 	\begin{align*}
 		\norm{u}_{C^s(\R^n)} \leq C\LC \norm{F}_{L^\infty(\Omega)} + \norm{f}_{Y} \RC,
 	\end{align*}
 	for some constant $C>0$ independent of $u$, $f$ and $F$.
 \end{thm}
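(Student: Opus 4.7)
The plan is a contraction mapping argument in a small ball of $C^s(\R^n)$, combining the linear theory (Proposition~\ref{Prop: Well-posedness}) with Ros-Oton--Serra type boundary regularity estimates for the fractional Dirichlet problem on $C^{1,1}$ domains.

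First I would set up the iteration. For $v\in C^s(\R^n)$ with $v\equiv f$ in $\Omega_e$ and $\|v\|_{C^s(\R^n)}\leq R$ (with $R$ to be chosen small), define $T(v):=u$ as the unique weak solution of the linearized problem
\begin{equation*}
\begin{cases}
(-\Delta)^s u + a^{(1)}(x)\,u = F(x) - \sum_{k=2}^N a^{(k)}(x)\,v(x)^k & \text{in }\Omega,\\
u = f & \text{in }\Omega_e.
\end{cases}
\end{equation*}
Under the standing hypothesis $a^{(1)}\geq 0$ (so the eigenvalue condition \eqref{eq:eigenvalue condition} holds), Proposition~\ref{Prop: Well-posedness} supplies a unique weak solution in $H^s(\R^n)$. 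When $N=\infty$, the absolute convergence of the series $\mathbf{a}(\cdot,u)$ in $C^s(\overline\Omega)$ for $|u|<\varepsilon$ ensures the right-hand side is well-defined in $L^\infty(\Omega)$ once $R<\varepsilon$.

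Next I would upgrade the regularity via Ros-Oton--Serra type boundary estimates: applied to $u-\tilde{f}$, where $\tilde{f}$ is a suitable extension of $f$ (this is where $f\in C^{2,s}_0(\Omega_e)\cap \wt H^s(\Omega_e)$ and $\partial\Omega\in C^{1,1}$ are used), they yield $u\in C^s(\R^n)$ together with
\begin{equation*}
\|u\|_{C^s(\R^n)}\leq C\LC \|F\|_{L^\infty(\Omega)}+\|f\|_Y+\sum_{k=2}^N\|a^{(k)}\|_{C^s(\overline\Omega)}\,\|v\|_{C^s(\R^n)}^k\RC,
\end{equation*}
with $C$ independent of $v$. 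Choosing $R:=2C(\varepsilon+\delta)$ and then $\varepsilon,\delta$ small enough that $C\sum_{k\geq 2}\|a^{(k)}\|_{C^s(\overline\Omega)}R^{k-1}\leq 1/2$, the map $T$ sends the closed ball $B_R\subset C^s(\R^n)$ (intersected with the affine constraint $v|_{\Omega_e}=f$) into itself. The contraction property follows from the algebraic identity $v_1^k-v_2^k=(v_1-v_2)\sum_{j=0}^{k-1}v_1^{j}v_2^{k-1-j}$: the difference $T(v_1)-T(v_2)$ solves the analogous linear problem with zero exterior data and source $-\sum_{k\geq 2}a^{(k)}(v_1^k-v_2^k)$, so the same linear estimate gives $\|T(v_1)-T(v_2)\|_{C^s(\R^n)}\leq \tfrac12\|v_1-v_2\|_{C^s(\R^n)}$ provided $R$ is small. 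Banach's fixed point theorem then produces a unique $u\in C^s(\R^n)$ solving \eqref{eq well-posedness}, and the asserted a priori bound follows by inserting $v=u$ into the linear estimate and absorbing the nonlinear tail into the left-hand side (it scales like $R^{k-1}\|u\|_{C^s}$).

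The principal obstacle I anticipate is the regularity upgrade from $H^s(\R^n)$ to $C^s(\R^n)$ with only an $L^\infty$ interior source and $C^{2,s}_0$ exterior datum: this is not contained in Proposition~\ref{Prop: Well-posedness} and must be imported from the sharp boundary regularity theory for $(-\Delta)^s$ on $C^{1,1}$ domains (Ros-Oton--Serra). For $N=\infty$, a separate, but routine, verification that the Nemytskii-type map $v\mapsto \sum_{k\geq 2}a^{(k)}v^k$ is Lipschitz from $B_R\subset C^s(\R^n)$ into $L^\infty(\Omega)$ is needed; this follows from the real analyticity hypothesis as soon as $R$ is smaller than the radius of convergence $\varepsilon$ of the series in the $C^s(\overline{\Omega})$-topology.
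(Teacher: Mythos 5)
Your argument is correct, but it takes a different route from the paper. The paper does not iterate: it packages the problem as a map $\Phi(u,F,f)=\bigl((-\Delta)^s u+\mathbf{a}(x,u)-F,\ u|_{\Omega_e}-f\bigr)$ on a Banach space $V=\{u\in H^s(\R^n):(-\Delta)^su\in L^\infty(\Omega),\ u|_\Omega\in C^s(\Omega),\ u|_{\Omega_e}\in C^{2,s}(\Omega_e)\}$, checks that $\partial_u\Phi(0,0,0)$ is an isomorphism (by the linear well-posedness), and invokes the implicit function theorem for Banach spaces; existence, uniqueness and the a priori bound then all come from the smooth solution map $h(F,f)$ with $h(0,0)=0$. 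Your contraction scheme is the ``unwound'' version of this and hinges on exactly the same linear ingredient --- the $C^s(\R^n)$ estimate for $(-\Delta)^s u+a^{(1)}u=G\in L^\infty(\Omega)$, $u|_{\Omega_e}=f\in C^{2,s}_0$, on a $C^{1,1}$ domain --- which the paper leaves implicit in the assertion that the linearization is an isomorphism onto $L^\infty(\Omega)\times V$, and which you correctly identify as the real content to be imported from Ros-Oton--Serra; your version is therefore more explicit about where $\partial\Omega\in C^{1,1}$, $f\in C^{2,s}_0(\Omega_e)$ and $a^{(1)}\ge 0$ enter. The one thing the paper's route buys that yours does not immediately deliver is the $C^\infty$ Fr\'echet differentiability of $(F,f)\mapsto u$, which is used later (in the successive linearization proving Theorem~\ref{Main Thm 2}) to differentiate solutions in the parameters $\eps_k$; a pure Banach fixed-point argument gives existence, uniqueness in a small ball, and Lipschitz dependence, so if one wanted to also recover the smooth dependence one would either add a separate argument or fall back on the implicit function theorem anyway. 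For the statement of Theorem~\ref{Thm local well-posedness} as written, your proof suffices.
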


 \begin{proof}
 	The proof is similar to the proof of \cite[Proposition 2.1]{LL2022uniqueness}.
 	Let 
 	\[
 	V:=\left\{ u\in H^s(\R^n): \,  (-\Delta)^s u\in L^\infty \Omega , \,  u|_{\Omega} \in C^s(\Omega),\, u|_{\Omega_e} \in C^{2,s}(\Omega_e) \right\},
 	\]
 	and 
 	$$
 	\norm{\phi}_{V}:=\norm{\phi}_{H^s(\R^n)}+\norm{(-\Delta)^s \phi}_{L^\infty(\Omega)}+\norm{\phi}_{C^s(\Omega)}+ \norm{\phi}_{C^{2,s}(\Omega_e)},
 	$$
 	then it is not hard to see that $\LC  V, \norm{\cdot}_V \RC$ is a Banach space.
 	Consider the map
 	\begin{align*}
 		\Phi: V\times L^\infty(\Omega) \times Y \to& L^\infty(\Omega) \times V,\\
 		(u,F,f)\mapsto  &  \LC (-\Delta)^s u +\mathbf{a}(x,u)-F, u|_{\Omega_e}-f  \RC,
 	\end{align*}
 	then $\Phi(0,0,0)=(0,0)$, where we utilized the condition \eqref{condition a polynomial} such that $\mathbf{a}(x,0)=0$ in $\Omega$. Similar to the proofs of (local) well-posedness in \cite{KU2019partial,LLLS2019nonlinear,LL2020inverse,LL2022uniqueness}, we can show that the first linearization of $\Phi=\Phi(u,F,f)$ at $(0,0,0)$ with respect to the $u$-variable is
 	\begin{align*}
 		\p_u\Phi(0,0,0):V\to & L^\infty(\Omega)\times V , \\
 		v \mapsto & \LC (-\Delta)^s v +\p_y\mathbf{a}(x,0)v, v|_{\Omega_e} \RC,
 	\end{align*}
 	which is isomorphism via the (global) well-posedness for linear fractional Schr\"odinger equation (c.f. Proposition \ref{Prop: Well-posedness}).

 	Notice that $\LC Y, \norm{\cdot}_Y \RC$ is also a Banach space. Finally, by utilizing the implicit function theorem for Banach spaces (see \cite[Chapter 10]{renardy2006introduction} for example), there is an open neighborhood $X_\varepsilon \times Y_\delta $ of $(0,0)$ in $L^\infty(\Omega)\times Y$ and a unique $C^\infty$-smooth function $h: X_\varepsilon \times Y_\delta\to V$ such that 
 	\[
 	\Phi(h(F,f),F,f)=(0,0),
 	\]
 	where $(F,f)\in \LC X_\varepsilon , Y_\delta  \RC$, for sufficiently small $\varepsilon, \delta>0$.
 	Since $h$ is smooth, and $h(0,0)=0$, the solution $u=h(F,f)$ satisfies 
 	\[
 	\norm{u}_V\leq C\LC \norm{F}_{L^\infty(\Omega)}+\norm{f}_Y \RC\leq C\LC \varepsilon+\delta\RC,
 	\]
    for some constant $C>0$ independent of $u,F$ and $f$.
 	Finally, indeed, it is not hard to check the map $\Phi$ is a $C^\infty$ Fr\'echet differentiable map as we want (see \cite{LLLS2019nonlinear} for more detailed arguments). This prove the assertion.
 \end{proof}

 In the end of this preliminary section, let us recall the global strong unique continuation principle and the Runge approximation for fractional elliptic equations (cf. \cite{ghosh2016calder,GRSU18,FF2014unique}).

 \begin{prop}[Global strong unique continuation property]\label{Prop:(Uniqueness-theorem)} 
 	Given $s\in(0,1)$, $n\in \N$, let $u\in H^{s}(\mathbb{R}^{n})$ be a function satisfying $u=(-\Delta)^s u=0$
 	on some positive measurable set $\mathcal{O}$ of $\mathbb{R}^{n}$. Then $u\equiv0$ in $\mathbb{R}^{n}$. 
 \end{prop}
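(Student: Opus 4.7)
The plan is to apply the Caffarelli--Silvestre extension and invoke strong unique continuation for the associated degenerate elliptic operator in one higher dimension. Define the extension $U: \R^n \times (0, \infty) \to \R$ as the unique finite-energy solution to
\begin{equation*}
\di(y^{1-2s} \nabla U) = 0 \ \ \text{in } \R^n \times (0, \infty), \qquad U(x, 0) = u(x) \ \ \text{on } \R^n,
\end{equation*}
so that the weighted conormal derivative at the boundary recovers the fractional Laplacian through the identity
\begin{equation*}
-c_s \lim_{y\to 0^+} y^{1-2s} \p_y U(x, y) = \Ds u(x)
\end{equation*}
for an explicit constant $c_s > 0$. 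Under the hypothesis, both $U$ and its weighted conormal derivative therefore vanish on $\mathcal{O} \times \{0\}$, where $\mathcal{O} \subset \R^n$ is the prescribed set of positive measure.

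Next, I would fix a Lebesgue density point $x_0 \in \mathcal{O}$ and work in a small half-ball centered at $(x_0, 0) \in \R^{n+1}$. On this half-ball one applies either the Almgren-type monotonicity formula for the degenerate operator $\di(y^{1-2s} \nabla \cdot )$ (developed by Fall and Felli) or, equivalently, a weighted Carleman inequality (as in R\"uland's work). Either tool delivers the strong boundary unique continuation: if $U$ and its weighted normal derivative vanish on a boundary subset of positive measure, then $U \equiv 0$ in a neighborhood of $(x_0, 0)$. The interior unique continuation principle for the same degenerate equation -- available in the classical form because $y^{1-2s}$ is an $A_2$-Muckenhoupt weight -- then propagates the zero set, forcing $U \equiv 0$ throughout $\R^n \times (0, \infty)$. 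Taking the trace at $y = 0$ gives $u \equiv 0$ on $\R^n$.

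The main obstacle is the boundary behavior of the extension: the weight $y^{1-2s}$ is either singular or degenerate at $y = 0$, so standard Carleman-type results for uniformly elliptic operators do not apply directly. Overcoming this requires either an Almgren frequency function carefully adapted to the $A_2$-weight, or a weighted Carleman estimate with a phase function matched to the geometry near $\{y=0\}$; both are delicate constructions and are the technical heart of the argument. Once the boundary monotonicity is in place, however, the vanishing of $U$ on a set of positive boundary measure forces the associated frequency to blow up, which is incompatible with its finiteness unless $U \equiv 0$, concluding the argument.
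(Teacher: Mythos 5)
The paper does not prove this proposition; it is recalled as a known result with citations to Ghosh--Salo--Uhlmann, Ghosh--R\"uland--Salo--Uhlmann and Fall--Felli, and your sketch (Caffarelli--Silvestre extension, boundary strong unique continuation at a Lebesgue density point via Almgren monotonicity or weighted Carleman estimates for the $A_2$-degenerate operator, then interior propagation and trace) is precisely the argument given in those references. So your proposal is correct and takes essentially the same approach as the paper's source for this statement.
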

 
 \begin{prop}[Runge approximation property]
 	\label{Prop:(Runge-approximation-property)} 
 	Let $\Omega\subset\mathbb{R}^{n}$ be a bounded open set and $D\subseteq\mathbb{R}^{n}$
 	be an arbitrarily open set such that $\Omega \Subset D$, for $n\in \N$.
 	Let $a^{(1)}\in L^{\infty}(\Omega)$ satisfies \eqref{eq:eigenvalue condition},
 	then for any $f\in L^{2}(\Omega)$, for any $\epsilon>0$, we can
 	find a function $u_{\epsilon}\in H^{s}(\mathbb{R}^{n})$ which solves
 	\[
 	\LC (-\Delta)^s+a^{(1)}\RC  u_{\epsilon}=0\mbox{ in }\Omega\quad \mbox{ and }\quad \mathrm{supp}\, (u_{\epsilon})\subseteq\overline{D}
 	\]
 	and 
 	\[
 	\|u_{\epsilon}-f\|_{L^{2}(\Omega)}<\epsilon.
 	\]
 \end{prop}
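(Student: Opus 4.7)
\emph{Plan of proof.} The plan is to run the standard Hahn--Banach plus unique continuation argument, adapted to the nonlocal framework. First I would reformulate the density claim: define the solution set
$$\mathcal{R} := \LCB u|_{\Omega} : u \in H^{s}(\R^n),\; \supp(u) \subseteq \overline{D},\; \LC (-\Delta)^{s} + a^{(1)} \RC u = 0 \text{ in } \Omega \RCB \subseteq L^{2}(\Omega),$$
which is a linear subspace. By Hahn--Banach together with the self-duality of $L^{2}(\Omega)$, proving density of $\mathcal{R}$ is equivalent to showing that any $v \in L^{2}(\Omega)$ with $\int_{\Omega} v u \dx = 0$ for every $u \in \mathcal{R}$ must vanish. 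Given such a $v$, Proposition~\ref{Prop: Well-posedness}, applied with zero exterior datum and source $v$ (which is legitimate because $L^2(\Omega)\hookrightarrow (\wt H^s(\Omega))^*$), yields a unique dual function $\phi \in \wt H^{s}(\Omega)$ solving $\LC (-\Delta)^{s} + a^{(1)} \RC \phi = v$ in $\Omega$ with $\phi = 0$ in $\Omega_e$.

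Next, for each test function $f \in C_c^\infty(D \setminus \overline{\Omega})$, I would let $u_f \in H^{s}(\R^n)$ be the unique solution of $\LC (-\Delta)^{s} + a^{(1)} \RC u_f = 0$ in $\Omega$ with $u_f = f$ in $\Omega_e$; then $u_f - f$ is supported in $\overline{\Omega}$, so $u_f - f \in \wt H^{s}(\Omega)$, and $u_f|_\Omega \in \mathcal{R}$. The core step is to evaluate the symmetric bilinear form
$$B(w_1,w_2) := \LA (-\Delta)^{s/2}w_1,\, (-\Delta)^{s/2}w_2 \RA_{\R^n} + \LA a^{(1)}w_1,\, w_2 \RA_{\Omega}$$
associated with $(-\Delta)^s + a^{(1)}$ in two ways. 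On the one hand, $\phi$ is admissible as a test function for the homogeneous equation satisfied by $u_f$, so $B(u_f,\phi)=0$, hence $B(\phi,u_f)=0$ by symmetry. On the other hand, $u_f-f\in\wt H^s(\Omega)$ is admissible in the weak formulation for $\phi$, giving $B(\phi,u_f-f)=\int_\Omega v(u_f-f)\dx$; since $f$ vanishes on $\Omega$ and $u_f|_\Omega\in\mathcal{R}$, this integral is zero. Subtracting the two identities gives $B(\phi,f)=0$. Because $\supp(f)\cap\Omega=\emptyset$, the potential term of $B$ disappears, and Plancherel reduces the remaining Gagliardo term to $\int_{D\setminus\overline\Omega} f\cdot(-\Delta)^s\phi\dx=0$. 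Varying $f$, I conclude that $(-\Delta)^s\phi\equiv 0$ on the nonempty open set $D\setminus\overline\Omega$; but $\phi\equiv 0$ there as well, so Proposition~\ref{Prop:(Uniqueness-theorem)} forces $\phi\equiv 0$ on $\R^n$, whence $v=\LC(-\Delta)^s+a^{(1)}\RC\phi=0$.

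The main obstacle is the careful bookkeeping of the nonlocal integration by parts when the two arguments of $B$ live in different function spaces ($\wt H^{s}(\Omega)$ versus $H^{s}(\R^n)$). Symmetry of $B$ on $H^{s}(\R^n)\times H^{s}(\R^n)$ is immediate from the Gagliardo-kernel representation, but one also has to verify that $(-\Delta)^s\phi$---a priori only a distribution which equals $v-a^{(1)}\phi$ in $\Omega$---is genuinely a smooth function on $\R^n\setminus\overline\Omega$, so that the pairing with $f$ may be read as an honest integral. This is transparent from \eqref{fractional Laplacian}: since $\supp(\phi)\subseteq\overline\Omega$, the principal-value integral collapses, for $x\notin\overline\Omega$, to $-c_{n,s}\int_\Omega \phi(y)|x-y|^{-n-2s}\,dy$, a convergent integral depending smoothly on $x$.
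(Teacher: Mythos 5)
The paper does not prove this proposition; it is recalled from the literature (Ghosh--Salo--Uhlmann and its sequels), and your argument is exactly the standard proof given there: Hahn--Banach reduction to the annihilator, the adjoint solution $\phi\in\wt H^s(\Omega)$, the two evaluations of the symmetric form $B$ yielding $(-\Delta)^s\phi=\phi=0$ on $D\setminus\overline\Omega$, and Proposition~\ref{Prop:(Uniqueness-theorem)} to conclude $\phi\equiv 0$, hence $v=0$. The argument is correct, including the key observations that $u_f-f\in\wt H^s(\Omega)$ is an admissible test function and that $(-\Delta)^s\phi$ is a genuine (smooth) function off $\overline\Omega$; the only point worth flagging is that the identification $H^s_{\overline\Omega}(\R^n)=\wt H^s(\Omega)$, which you use implicitly when placing $\phi$ in $\wt H^s(\Omega)$, requires $\partial\Omega$ to be Lipschitz (harmless here, since the paper works with $C^{1,1}$ domains).
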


 \section{Inverse problems for linear fractional elliptic equations}\label{Sec 3}
 
 In this section, let us prove Theorem \ref{Main Thm 1} for the linear case, that is, consider the function 
 \begin{align}\label{linear a in sec 3}
 	 \mathbf{a}(x,u)=a^{(1)}(x)u,
 \end{align}
 to be a linear function with respect to $u\in \R$.
 In particular, if $a^{(1)}=0$ in $\Omega$, we can simply use the passive measurement to determine the source.
 
 \begin{thm}[Unique determination by the passive measurement]\label{Thm: Linear passive}
 	Let $\Omega \subset\R^n$ be a bounded domain with Lipschitz boundary $\p \Omega$, for $n\geq 1$, and $s\in (0,1)$.  Given an open subset $W_2\subset \Omega_e$, let $\mathcal{C}_{0,F_j}^0$ be the passive measurements 
 	\begin{align}\label{equ passive one}
 		\begin{cases}
 			(-\Delta)^s u_j^{(0)} =F_j &\text{ in }\Omega, \\
 			u_j^{(0)}=0 &\text{ in }\Omega_e,
 		\end{cases}
 	\end{align}
 	for $j=1,2$. If  
 	\begin{align}\label{Cauchy: Linear passive}
 		\mathcal{C}_{0,F_1}^0=\mathcal{C}_{0,F_2}^0 \text{ in }W_2, 
 	\end{align}
 	then $F_1=F_2$ in $\Omega$.
 \end{thm}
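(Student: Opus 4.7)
The plan is to reduce the problem to the global strong unique continuation property (Proposition~\ref{Prop:(Uniqueness-theorem)}) via a standard subtraction argument. Let $u_j^{(0)}\in H^s(\R^n)$ denote the unique solutions of \eqref{equ passive one}, whose existence and uniqueness is guaranteed by Proposition~\ref{Prop: Well-posedness} applied with $a^{(1)}\equiv 0$ (for which \eqref{eq:eigenvalue condition} is trivially satisfied since $(-\Delta)^s$ has no zero Dirichlet eigenvalue on a bounded domain).

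The first step is to set $v := u_1^{(0)} - u_2^{(0)} \in H^s(\R^n)$ and translate the two pieces of information encoded in the hypothesis \eqref{Cauchy: Linear passive} into pointwise data on $v$. Since both $u_j^{(0)}$ vanish on $\Omega_e$, one has $v=0$ in $\Omega_e$, and in particular $v=0$ in $W_2\subset \Omega_e$. On the other hand, linearity of the fractional Laplacian and the equality of the exterior nonlocal Neumann data in $W_2$ give
\[
(-\Delta)^s v = (-\Delta)^s u_1^{(0)} - (-\Delta)^s u_2^{(0)} = 0 \quad \text{in } W_2.
\]

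The second step is to apply the global strong unique continuation principle (Proposition~\ref{Prop:(Uniqueness-theorem)}) to $v\in H^s(\R^n)$: since $v = (-\Delta)^s v = 0$ on the open, hence positive-measure, set $W_2$, we conclude that $v\equiv 0$ in $\R^n$. Restricting to $\Omega$, this yields
\[
F_1 - F_2 = (-\Delta)^s u_1^{(0)} - (-\Delta)^s u_2^{(0)} = (-\Delta)^s v = 0 \quad \text{in } \Omega,
\]
which is the desired conclusion.

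There is essentially no analytic obstacle here: the entire argument is a direct application of the nonlocality of $(-\Delta)^s$ together with the unique continuation result already recorded. The only point to verify with care is that $v$ has enough regularity (namely $v\in H^s(\R^n)$) to invoke Proposition~\ref{Prop:(Uniqueness-theorem)}, which follows from Proposition~\ref{Prop: Well-posedness} since $F_j\in (\widetilde H^s(\Omega))^\ast$ is automatic from $F_j\in L^\infty(\Omega)$ on the bounded domain $\Omega$. This short argument highlights that, in sharp contrast to the local case $s=1$, a single passive measurement already suffices to determine an unknown source in the fractional setting.
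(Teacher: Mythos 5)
Your argument is correct and is essentially identical to the paper's own proof: both subtract the two solutions, observe that the difference and its fractional Laplacian vanish on $W_2$, invoke the global strong unique continuation property (Proposition~\ref{Prop:(Uniqueness-theorem)}) to conclude the difference vanishes identically, and then read off $F_1=F_2$ from the equation. The extra remarks on well-posedness and regularity are accurate but not needed beyond what the paper already assumes.
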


 \begin{proof}
 	The proof is based on the global strong unique continuation property (Proposition \ref{Prop:(Uniqueness-theorem)}). Let $u_j^{(0)}\in H^s(\R^n)$ be the solution of \eqref{equ passive one}, and the condition \eqref{Cauchy: Linear passive} implies that $(-\Delta)^s \LC u_1^{(0)}-u_2^{(0)} \RC=0 $ in $W_2$. Combining with $u_1^{(0)}-u_2^{(0)}=0$ in $W_2$, and applying Proposition \ref{Prop:(Uniqueness-theorem)}, one must have $u_1^{(0)}=u_2^{(0)}$ in $\R^n$, which guarantees that 
 	\[
 	F_1 =(-\Delta)^s u_1^{(0)}=(-\Delta)^s u_2^{(0)}=F_2 \text{ in }\Omega
 	\]
 	as desired. This proves the assertion.
 \end{proof}

 \begin{rmk}\label{rmk:counterexample}
 	It is worth mentioning that:
 	\begin{itemize}
 		\item[(a)] 	It is known that without further assumptions for the source $F$, it is not possible to determine the source uniquely for local differential equations. A simple example  can be considered by the Poisson equation 
 		\begin{align}\label{equation local counter}
 			\begin{cases}
 				-\Delta u_j =F_j &\text{ in }\Omega,\\
 				u_j=f& \text{ on }\p \Omega,
 			\end{cases}
 		\end{align}
 		for $j=1,2$. In fact, to find the obstruction for the unique determination problem, let $\phi \in C_c^{2}(\Omega)$ be an arbitrary function, then one has $\phi =\p_\nu \phi =0$ on $\p \Omega$. Let $\LC  u_j|_{\p \Omega},\p_\nu u_j|_{\p \Omega} \RC$ be the Cauchy data of \eqref{equation local counter}, even if 
 		$$
 		\LC  u_1|_{\p \Omega},\, \p_\nu u_1|_{\p \Omega} \RC = \LC  u_2|_{\p \Omega},\, \p_\nu u_2|_{\p \Omega} \RC,
 		$$ 
 		but we can always write $F_2=F_1-\Delta \phi$, and $\Delta\phi$ can be arbitrary. Therefore, the unique determination is not possible for the unknown sources  in general.
 		
 		\item[(b)] For the fractional case, we do not have an analogous counterexample similar to the local case. The reason is that we cannot find any nontrivial function $\phi$ such that the Dirichlet data and Neumann data are zero in the measured domain in the fractional setup. If there exists a such function $\phi\in C^2_c(\Omega)$, with the same Cauchy data, then we must have $\phi=(-\Delta)^s\phi=0$ in some open subset of $\Omega_e$. This implies that $\phi \equiv 0$ in $\R^n$ (see Proposition \ref{Prop:(Uniqueness-theorem)}). Hence, it is possible to determine the unknown source uniquely via exterior measurements for fractional inverse source problems.
 	\end{itemize}
 \end{rmk}

 We next study the linear fractional Schr\"odinger equation with source, and consider the function \eqref{linear a in sec 3} (\eqref{condition a polynomial} for $N=1$). As a matter of fact, for the linear case, we do not need to assume the coefficient $a^{(1)}\in C^s(\overline{\Omega})$, but we assume $a^{(1)}\in C^0(\overline{\Omega})$, which is a continuous function.

 \begin{thm}[Unique determination by two measurements]\label{Thm: Linear two meas}
 	Let $\Omega \subset\R^n$ be a bounded domain with Lipschitz boundary $\p \Omega$, for $n\geq 1$, $s \in (0,1)$, and $a^{(1)}_j\in C^0(\overline{\Omega})$. Consider the following fractional Schr\"odinger equation
 	\begin{align}\label{equation in linear two meas}
 		\begin{cases}
 			\LC (-\Delta)^s+a^{(1)}_j\RC u_j= F_j & \text{ in }\Omega,\\
 			u_j=f &\text{ in } \Omega_e,
 		\end{cases}
 	\end{align}
 	for $j=1,2$. Given arbitrarily open subsets $W_1,W_2\subset\Omega_e$, and an exterior data $f\in \wt H^s(W_1)\setminus \{0\}$, let $\mathcal{C}_{a^{(1)}_j,F_j}^{W_1,W_2}(f)$ and $\mathcal{C}^0_{a^{(1)}_j,F_j}$ be the Cauchy data and passive measurement of \eqref{equation in linear two meas}, respectively. Then 
 	\begin{align}\label{DN fractional 2}
 		\mathcal{C}_{a^{(1)}_1,F_1}^0=\mathcal{C}_{a^{(1)}_2,F_2}^0 \quad  \text{ and }	 \quad 	\mathcal{C}_{a^{(1)}_1,F_1}^{W_1,W_2}(f)=\mathcal{C}_{a^{(1)}_2,F_2}^{W_1,W_2}(f)
 	\end{align}
 	implies that 
 	\[
 	a^{(1)}_1=a^{(1)}_2 \quad \text{ and }\quad F_1=F_2 \text{ in }\Omega.
 	\]
 \end{thm}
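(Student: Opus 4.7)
The strategy is to apply the global strong unique continuation principle (Proposition \ref{Prop:(Uniqueness-theorem)}) twice, once for the passive pair and once for the active pair, and then exploit linearity to peel off the source and the potential successively.

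First I would handle the \emph{passive measurements}. Let $u_j^{(0)} \in H^s(\mathbb{R}^n)$ denote the solution of \eqref{equ passive one}-type system with potential $a_j^{(1)}$ and source $F_j$, and set $w := u_1^{(0)} - u_2^{(0)}$. By construction $w = 0$ in $\Omega_e$, in particular on $W_2$, while the hypothesis $\mathcal{C}_{a_1^{(1)},F_1}^0 = \mathcal{C}_{a_2^{(1)},F_2}^0$ yields $(-\Delta)^s w = 0$ on $W_2$. Proposition \ref{Prop:(Uniqueness-theorem)} then forces $w \equiv 0$ on $\mathbb{R}^n$, so $u_1^{(0)} = u_2^{(0)} =: u^{(0)}$. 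Plugging $u^{(0)}$ into both passive equations and subtracting gives the key identity
\begin{equation*}
F_1 - F_2 = \bigl(a_1^{(1)} - a_2^{(1)}\bigr)\, u^{(0)} \quad \text{in } \Omega.
\end{equation*}

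Next I would run exactly the same UCP argument on the \emph{active measurements}. Let $u_j$ solve \eqref{equation in linear two meas} with exterior data $f$ and set $\tilde w := u_1 - u_2$. Since both $u_j$ equal $f$ in $\Omega_e$ and the Neumann-type data $(-\Delta)^s u_j$ agree on $W_2$, one has $\tilde w = (-\Delta)^s \tilde w = 0$ on $W_2$, hence $u_1 \equiv u_2$ in $\mathbb{R}^n$. Subtracting the two active equations then gives
\begin{equation*}
\bigl(a_1^{(1)} - a_2^{(1)}\bigr)\, u_1 = F_1 - F_2 = \bigl(a_1^{(1)} - a_2^{(1)}\bigr)\, u^{(0)} \quad \text{in } \Omega.
\end{equation*}
Setting $v := u_1 - u^{(0)}$, which by linearity satisfies $((-\Delta)^s + a_1^{(1)}) v = 0$ in $\Omega$ with exterior data $v|_{\Omega_e} = f$, this reduces to $(a_1^{(1)} - a_2^{(1)})\, v = 0$ in $\Omega$.

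The final step, which I expect to be the main technical point, is to show that $v$ cannot vanish on a set of positive measure inside $\Omega$, so that the continuous function $a_1^{(1)} - a_2^{(1)}$ must vanish identically on $\Omega$. Suppose $v = 0$ on a measurable set $E \subset \Omega$ with $|E| > 0$. Then on $E$ we also have $(-\Delta)^s v = -a_1^{(1)} v = 0$, so Proposition \ref{Prop:(Uniqueness-theorem)} forces $v \equiv 0$ on $\mathbb{R}^n$, contradicting $v|_{W_1} = f \not\equiv 0$. Hence $\{v = 0\} \cap \Omega$ has measure zero, so $a_1^{(1)} = a_2^{(1)}$ a.e.\ in $\Omega$, and by the assumed continuity of the potentials this identity holds pointwise. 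Substituting back into the passive identity gives $F_1 = F_2$ in $\Omega$, completing the proof. The delicate ingredient here is the hypothesis $f \not\equiv 0$: it is exactly what prevents the trivial solution from masking the potential, and it is why the passive measurement alone (Theorem \ref{Thm: Linear passive}) cannot recover $a^{(1)}$, while adding a single nontrivial active probe suffices.
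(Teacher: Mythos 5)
Your proposal is correct and follows essentially the same route as the paper: subtract the active and passive problems to obtain $v=u_1-u^{(0)}$ solving the homogeneous equation $\LC(-\Delta)^s+a_1^{(1)}\RC v=0$ in $\Omega$ with exterior data $f$, deduce $(a_1^{(1)}-a_2^{(1)})v=0$ in $\Omega$, conclude $a_1^{(1)}=a_2^{(1)}$, and then recover $F_1=F_2$ from the passive solutions. The one difference is at the potential-determination step: the paper invokes the single-measurement result \cite[Theorem 1]{GRSU18} as a black box, whereas you reprove it inline by showing that $v$ cannot vanish on a set of positive measure (on such a set one would also have $(-\Delta)^s v=-a_1^{(1)}v=0$, so Proposition \ref{Prop:(Uniqueness-theorem)} would force $v\equiv 0$, contradicting $v|_{W_1}=f\not\equiv 0$). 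This makes your argument more self-contained, at the cost of relying on the measurable-set version of the unique continuation principle — which is exactly how Proposition \ref{Prop:(Uniqueness-theorem)} is stated in the paper, so the step is legitimate; alternatively, since $a_1^{(1)}-a_2^{(1)}$ is continuous, you could run the same contradiction on an open ball where it is nonzero and only need the open-set UCP.
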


 \begin{proof}
 	Given an exterior data $f\in \wt H^s(W_1)\setminus \{0\}$, let $u_j$ and $u_j^{(0)}$ be the solutions of \eqref{equation in linear two meas} and
 	\begin{align*}
 		\begin{cases}
 			\LC (-\Delta)^s +a^{(1)}_j\RC u_j^{(0)} =F_j &\text{ in }\Omega, \\
 			u_j^{(0)}=0 &\text{ in }\Omega_e,
 		\end{cases}
 	\end{align*}
 	respectively, for $j=1,2$.  In addition, via the condition \eqref{DN fractional 2}, one has 
 	\begin{align}\label{same exterior Neumann}
 		(-\Delta)^s\LC u_1-u_2\RC =(-\Delta)^s \LC u_1^{(0)}-u_2^{(0)}\RC=0 \text{ in }W_2.
 	\end{align}

 	By subtracting the above two equations, we obtain that 
 	\begin{align}
 		\begin{cases}
 			\LC(-\Delta)^s +a^{(1)}_j\RC v_j=0 &\text{ in }\Omega,\\
 			v_j=f &\text{ in }\Omega_e,
 		\end{cases}
 	\end{align}
 	where $v_j:=u_j-u_j^{(0)}$ in $\R^n$, for $j=1,2$. By the identity \eqref{same exterior Neumann}, we obtain $(-\Delta)^s\LC v_1-v_2 \RC=0$ in $W_2$. On the other hand, we have $v_1-v_2=0$ in $\Omega_e$, since $a^{(1)}_j\in C^0(\overline{\Omega})$ for $j=1,2$, by applying \cite[Theorem 1]{GRSU18}, one can determine 
 	$$a^{(1)}_1=a^{(1)}_2 \text{ in } \Omega
 	$$ 
 	by using single measurement. Finally, similar to the proof of Theorem \ref{Main Thm 1}, it is not hard to see that 
 	\[
 	F_1 = \LC (-\Delta)^s +a^{(1)}_1 \RC u_1^{(0)}=\LC (-\Delta)^s +a^{(1)}_2 \RC u_2^{(0)} = F_2 \text{ in }\Omega,
 	\]
 	where we utilized $u_1^{(0)}=u_2^{(0)}$ in $\R^n$ (by Proposition \ref{Prop:(Uniqueness-theorem)} again). This completes the proof.
 \end{proof}
 
 \begin{rmk}
 	Note that Theorems \ref{Main Thm 1} and \ref{Main Thm 2} hold when the fractional Laplacian $(-\Delta)^s$ can be replace by more general nonlocal variable elliptic operator 
 	$$
 	\mathcal{L}^s=(-\nabla \cdot (\sigma \nabla))^s,\quad \text{ for }0<s<1,$$
 	where $\sigma=\LC\sigma_{ij}\RC_{1\leq i,j\leq n}$ is a $C^\infty$-smooth, positive definite, symmetric matrix-valued function. The key ingredient in the proof of Theorems \ref{Main Thm 1} and \ref{Main Thm 2} is based on the strong unique continuation property, and such property also holds for $\mathcal{L}^s$ (see \cite[Theorem 1.2]{GLX}). In addition, some inverse problems for this nonlocal variable coefficients operator $\mathcal{L}^s$ have been studied by \cite{CLL2017simultaneously,CL2019determining,GLX}.
 \end{rmk}

 \section{Inverse problems for semilinear fractional elliptic equations}\label{Sec 4}
 
 We prove Theorems \ref{Main Thm 1} and \ref{Main Thm 2} when $\mathbf{a}(x,u)$ are nonlinear with respect to $u\in \R$, which is of the form \eqref{condition a polynomial}.

 \subsection{Minimal number of measurements}

 With the local well-posedness (Theorem \ref{Thm local well-posedness}) at hand, we first prove Theorem \ref{Main Thm 1} with $(N+1)$ measurements. Before showing Theorem \ref{Main Thm 1}, let us study a special case as $N=2$, i.e.,
 \begin{align*}
 	\mathbf{a}(x,u)=a^{(1)}u+a^{(2)}u^2,
 \end{align*}
  in order to demonstrate the idea of the proof of Theorem \ref{Main Thm 1}.

 \begin{prop}[Three measurements]\label{Prop three measurements}
 	Let $\Omega \subset\R^n$ be a bounded domain with $C^{1,1}$ boundary $\p \Omega$, for $n\geq 1$, and $s\in (0,1)$.  Let $W_1,W_2 \subset \Omega_e$ be arbitrarily nonempty open subsets, and consider 
 	\begin{align}\label{equation 3 meas quadratic 0} 
 		\begin{cases}
 			(-\Delta)^s u_j + a_j^{(1)}u_j + a_j^{(2)}u_j^2= F_j & \text{ in }\Omega,\\
 			u_j=f &\text{ in } \Omega_e,
 		\end{cases}
 	\end{align}
 	where $a_j^{(\ell)}=a_j^{(\ell)}(x)\in C^s(\overline{\Omega})$, for $j,\ell=1,2$. Assuming that $F_j\in X_\varepsilon$, for sufficiently small $\varepsilon>0$, if 
 	\begin{align}\label{Cauchy equal 3 meas quadratic}
 		\mathcal{C}^{W_1,W_2}_{a_1,F_1}(f_\ell)=\mathcal{C}^{W_1,W_2}_{a_2,F_2}(f_\ell ), 
 	\end{align}
 	for $3$ different Dirichlet data $f_\ell\in Y_\delta$ with sufficiently small $\delta>0$, $\ell=0,1,2$,  then 
 	\[
 	a_1^{(1)}=a_2^{(1)}, \quad a_1^{(2)}=a_2^{(2)} \quad \text{ and }\quad F_1=F_2 \text{ in }\Omega.
 	\]
 \end{prop}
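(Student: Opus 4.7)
The plan is to bypass successive linearization entirely and instead combine the strong unique continuation property (Proposition~\ref{Prop:(Uniqueness-theorem)}) with a Vandermonde-type pointwise linear algebra argument. For each $\ell\in\{0,1,2\}$ I write $u_j^{(\ell)}$ ($j=1,2$) for the unique solution of \eqref{equation 3 meas quadratic 0} with exterior datum $f_\ell$ produced by Theorem~\ref{Thm local well-posedness}.

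First I would collapse the two media at the solution level: the difference $v^{(\ell)}:=u_1^{(\ell)}-u_2^{(\ell)}$ vanishes in $\Omega_e$, and the matched Cauchy data \eqref{Cauchy equal 3 meas quadratic} force $(-\Delta)^s v^{(\ell)}=0$ on $W_2$, so Proposition~\ref{Prop:(Uniqueness-theorem)} gives $v^{(\ell)}\equiv 0$ in $\R^n$. Setting $u^{(\ell)}:=u_1^{(\ell)}=u_2^{(\ell)}$ and subtracting the equations in \eqref{equation 3 meas quadratic 0} for $j=1,2$ yields
\[
b^{(1)}(x)\,u^{(\ell)}(x)+b^{(2)}(x)\,\bigl(u^{(\ell)}(x)\bigr)^2=G(x),\qquad x\in\Omega,\ \ell=0,1,2,
\]
where $b^{(k)}:=a_1^{(k)}-a_2^{(k)}\in C^s(\overline{\Omega})$ and $G:=F_1-F_2\in L^\infty(\Omega)$. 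For fixed $x$ these three scalar identities form a $3\times 3$ linear system in the triple $(b^{(1)}(x),b^{(2)}(x),G(x))$ whose coefficient matrix has Vandermonde-type determinant $\pm\prod_{0\le\ell<\ell'\le 2}\bigl(u^{(\ell')}(x)-u^{(\ell)}(x)\bigr)$; hence wherever the three values $u^{(0)}(x),u^{(1)}(x),u^{(2)}(x)$ are mutually distinct, the only solution is the trivial one.

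The hard part will be showing that each coincidence set $E_{\ell,\ell'}:=\{x\in\Omega:u^{(\ell)}(x)=u^{(\ell')}(x)\}$ has Lebesgue measure zero. For this I would observe that $w:=u^{(\ell)}-u^{(\ell')}$ satisfies the linear fractional Schr\"odinger equation
\[
(-\Delta)^s w+V_{\ell,\ell'}\,w=0 \ \text{ in }\Omega,\qquad w=f_\ell-f_{\ell'} \ \text{ in }\Omega_e,
\]
with bounded potential $V_{\ell,\ell'}:=a_1^{(1)}+a_1^{(2)}\bigl(u^{(\ell)}+u^{(\ell')}\bigr)\in L^\infty(\Omega)$. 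If $E_{\ell,\ell'}$ had positive measure, then on $E_{\ell,\ell'}$ both $w=0$ and $(-\Delta)^s w=-V_{\ell,\ell'}w=0$ hold a.e., so Proposition~\ref{Prop:(Uniqueness-theorem)} would force $w\equiv 0$ in $\R^n$, contradicting $f_\ell\not\equiv f_{\ell'}$ in $\Omega_e$. This is exactly where the hypothesis that the three exterior data are pairwise non-identical is consumed, and it is the genuinely nonlocal ingredient of the argument.

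Finally, the union $N:=\bigcup_{\ell\neq\ell'}E_{\ell,\ell'}$ is a Lebesgue-null set, so $\Omega\setminus N$ is dense in $\Omega$. The Vandermonde inversion from the previous paragraph gives $b^{(1)}=b^{(2)}=G=0$ on $\Omega\setminus N$; continuity of $b^{(k)}\in C^s(\overline{\Omega})$ together with density of $\Omega\setminus N$ promotes this to $a_1^{(k)}\equiv a_2^{(k)}$ in $\Omega$ for $k=1,2$, while $G\in L^\infty(\Omega)$ vanishing on a full-measure set gives $F_1=F_2$ in $\Omega$, completing the argument.
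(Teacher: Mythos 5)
Your proposal is correct and follows essentially the same strategy as the paper: matched Cauchy data plus the strong unique continuation property (Proposition~\ref{Prop:(Uniqueness-theorem)}) identify the solutions for $j=1,2$, and then a pointwise Vandermonde-type linear system, nondegenerate off a null set because distinct exterior data force $u^{(\ell)}-u^{(\ell')}\neq 0$ a.e.\ (again by unique continuation), yields the coefficients and the source. The only cosmetic difference is that you keep $F_1-F_2$ as a third unknown in a $3\times 3$ system, whereas the paper first eliminates the source by subtracting the $\ell=0$ equation and solves a $2\times 2$ system, recovering $F$ at the end; your treatment of the coincidence sets is in fact a more careful writeup of the paper's footnoted claim.
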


 \begin{proof}
 	Via Theorem \ref{Thm local well-posedness}, it is known that \eqref{equation 3 meas quadratic 0} is local well-posed, whenever $\varepsilon,\delta>0$ are sufficiently small. Let us choose $f_0=0$, and different $f_1,f_2\in Y_\delta$ to be not identically zero such that $\supp\, (f_1), \supp\, (f_2)\subset W_1$. Meanwhile, let $u_j^{(0)}$, $u_j^{(1)}$ and $u_j^{(2)}$ be the solutions of
 	\begin{align}\label{equation 3 meas quadratic}
 		\begin{cases}
 			(-\Delta)^s u_j^{(\ell)} + a_j^{(1)}u_j ^{(\ell)}+ a_j^{(2)}\LC u_j^{(\ell)}\RC^2= F_j & \text{ in }\Omega,\\
 			u_j^{(\ell)}=f_\ell &\text{ in } \Omega_e,
 		\end{cases}
 	\end{align}
 	for $\ell =0,1,2$.

 	By using \eqref{Cauchy equal 3 meas quadratic} and Proposition \ref{Prop:(Uniqueness-theorem)}, it is known that 
 	\begin{align}\label{unique sol in 3 meas qudratic}
 		u^{(\ell)}:=u_1^{(\ell)}=u_2^{(\ell)} \text{ in }\R^n, \text{ for }\ell =0,1,2.
 	\end{align}
 	On the other hand, by subtracting \eqref{equation 3 meas quadratic} between $\ell=0,1,2$, one has that 
 	\begin{align}\label{sub 1 in 3 meas}
 		\begin{cases}
 			(-\Delta)^s \LC u^{(1)}-u^{(0)} \RC +a_j^{(1)} \LC u^{(1)}-u^{(0)}\RC +  a_j^{(2)} \left[ \LC u^{(1)}\RC^2 -\LC u^{(0)} \RC^2  \right]=0  &\text{ in }\Omega,\\
 			u^{(1)}-u^{(0)}=f_1 &\text{ in }\Omega_e, \\
 		\end{cases}
 	\end{align}
 	and 
 	\begin{align}\label{sub 2 in 3 meas}
 		\begin{cases}
 		(-\Delta)^s \LC u^{(2)}-u^{(0)} \RC+	a_j^{(1)} \LC u^{(2)}-u^{(0)}\RC +  a_j^{(2)} \left[ \LC u^{(2)}\RC^2 -\LC u^{(0)} \RC^2  \right]=0 &\text{ in }\Omega,\\
 			u^{(2)}-u^{(0)}=f_2 &\text{ in }\Omega_e, \\
 		\end{cases}
 	\end{align}     
 	for $j=1,2$, where we used the uniqueness of solutions \eqref{unique sol in 3 meas qudratic}. 
 	Furthermore, subtracting \eqref{sub 1 in 3 meas} between $j=1,2$, we have 
 	\begin{align}\label{sub 3 in 3 meas}
 		\LC a_1^{(1)}-a_2^{(1)}\RC \LC u^{(1)}-u^{(0)}\RC + \LC a_1^{(2)}-a_2^{(2)}\RC \left[ \LC u^{(1)}\RC^2 -\LC u^{(0)} \RC^2  \right]=0 \text{ in }\Omega.
 	\end{align}
 	Similarly,  subtracting \eqref{sub 2 in 3 meas} between $j=1,2$ yields that 
 	\begin{align}\label{sub 4 in 3 meas}
 		\LC a_1^{(1)}-a_2^{(1)}\RC \LC u^{(2)}-u^{(0)}\RC +  \LC a_1^{(2)}-a_2^{(2)}\RC \left[ \LC u^{(2)}\RC^2 -\LC u^{(0)} \RC^2  \right]=0 \text{ in }\Omega.
 	\end{align}

 	Notice that both $u^{(1)}-u^{(0)}$ and $u^{(2)}-u^{(0)}$ are nonzero a.e. in $\R^n$ (by Proposition \ref{Prop:(Uniqueness-theorem)})\footnote{This can be seen via the $C^s$-H\"older continuity for solutions.}, so that \eqref{sub 1 in 3 meas} and \eqref{sub 2 in 3 meas} are equivalent to 
 	\begin{align}\label{sub 5 in 3 meas}
 		\begin{cases}
 			\LC a_1^{(1)}-a_2^{(1)}\RC + \LC a_1^{(2)}-a_2^{(2)}\RC\LC u^{(1)}+u^{(0)} \RC =0 &\text{ in }\Omega,\\
 			\LC a_1^{(1)}-a_2^{(1)}\RC+ \LC a_1^{(2)}-a_2^{(2)}\RC \LC u^{(2)}+u^{(0)}  \RC=0 &\text{ in }\Omega.
 		\end{cases}
 	\end{align}
 	To proceed, let us subtract \eqref{sub 5 in 3 meas}, then we have 
 	\begin{align}\label{point id in 3 meas}
 		\LC a_1^{(2)}-a_2^{(2)}\RC \LC u^{(1)}-u^{(2)}\RC =0 \text{ in }\Omega,
 	\end{align} 
 	and Proposition \ref{Prop:(Uniqueness-theorem)} yields that $u^{(1)}-u^{(2)}\neq 0$ a.e. in $\R^n$, since $f_1\neq f_2 $.
 	Hence, \eqref{point id in 3 meas} implies that $a_1^{(2)} - a_2^{(2)}=0$ a.e. in $\Omega$ such that 
 	\begin{align}\label{point id in 3 meas 1}
 		a_1^{(2)} - a_2^{(2)}=0 \text{ in }\Omega,
 	\end{align}
 	due to $a_j^{(2)}\in C^s(\overline{\Omega})$, for $j=1,2$. Let us plug \eqref{point id in 3 meas 1} into \eqref{sub 5 in 3 meas}, we can conclude that 
 	\begin{align}\label{point id in 3 meas 2}
 		a_1^{(1)}=a_2^{(1)} \text{ in }\Omega.
 	\end{align}
 	as desired. 
 	Finally, inserting \eqref{unique sol in 3 meas qudratic}, \eqref{point id in 3 meas 1} and \eqref{point id in 3 meas 2} into \eqref{equation 3 meas quadratic}, we obtain $F_1=F_2$ in $\Omega$. This completes the proof.
 \end{proof}

 We have shown that when there are three unknown factors, then we can use three different measurements to determine these unknowns. Let us next prove Theorem \ref{Main Thm 1}.
 
 \begin{proof}[Proof of Theorem \ref{Main Thm 1}]
 	We first express the function $a_j(x,y)$ of the form \eqref{condition a polynomial} as 
 	\begin{align*}
 		\mathbf{a}_j(x,y)=\sum_{k=1}^N a_j^{(k)}(x)y^k,
 	\end{align*}	
 	for $j=1,2$, then it remains to show that 
 	\begin{align}\label{claim equal coefficient in thm 1}
 		a_1^{(k)}=a_2^{(k)} \text{ in }\Omega, \text{ for }k=1,2,\ldots, N.
 	\end{align}
 	Let $f_0=0, f_1,\ldots, f_N\in Y_\delta$, which are mutually different, and consider $u_j^{(\ell)}$ to be the solutions of 
 	\begin{align}\label{equation in the proof of thm 1}
 		\begin{cases}
 			(-\Delta)^s u_j^{(\ell)} + \mathbf{a}_j(x,u_j^{(\ell)})= F_j & \text{ in }\Omega,\\
 			u_j^{(\ell)}=f_\ell &\text{ in } \Omega_e,
 		\end{cases}
 	\end{align}
 	for $\ell=0,1,\ldots, N$ and $j=1,2$.

 	Similar to the arguments in the proof of Proposition \ref{Prop three measurements}, with \eqref{Cauchy fractional 1} at hand, it is not hard to see that 
 	\begin{align}\label{unique sol in pf of thm1}
 		u^{(\ell)}:=u_1^{(\ell)}=u_2^{(\ell)} \text{ in }\R^n, \quad\text{ for }\ell=0,1,\ldots, N.
 	\end{align}
 	Moreover, via \eqref{equation in the proof of thm 1} and \eqref{unique sol in pf of thm1}, it is not hard to derive
 	\begin{align*}
 		\sum_{k=1}^N a_j^{(k)}\left[ \LC u^{(\ell)}\RC ^k-\LC u^{(0)} \RC^k \right]=0 \text{ in }\Omega,
 	\end{align*}
 	for $j=1,2$, so that 
 	\begin{align}\label{alg equ in pf of thm 1}
 		\sum_{k=1}^N \LC a_1^{(k)} -a_2^{(k)} \RC \left[ \LC u^{(\ell)}\RC^k-\LC  u^{(0)} \RC^k \right]=0 \text{ in }\Omega,
 	\end{align}
 	for all $\ell=0,1,\ldots, N$.
 	
 	Let us rewrite \eqref{alg equ in pf of thm 1} as 
 	\begin{align}\label{alg equ in pf of thm 1 2}
 		\mathbf{U}\mathbf{A}=0 \text{ in }\Omega, 
 	\end{align}
 	where $\mathbf{U}$ is an $N\times N$ matrix
 	\begin{equation}\label{matrix U}
 		\mathbf{U}:=\left( \begin{matrix}
 			u^{(1)} -u^{(0)} & \LC u^{(1)}\RC^2 -\LC u^{(0)} \RC^2 & \ldots & \LC u^{(1)} \RC^N -\LC u^{(0)} \RC^N\\
 			u^{(2)} -u^{(0)} & \LC u^{(2)}\RC^2 -\LC u^{(0)} \RC^2 & \ldots & \LC u^{(2)} \RC^N- \LC u^{(0)} \RC^N\\
 			\vdots&  \vdots &          \ddots        & \vdots  \\
 			u^{(N)} -u^{(0)} & \LC u^{(N)} \RC^2-\LC u^{(0)} \RC^2 & \ldots & \LC u^{(N)}\RC^N -\LC u^{(0)} \RC^N\end{matrix} \right) 
 	\end{equation}
 	and $\mathbf{A}$ is an $N$-column vector 
 	\begin{equation}\label{A-vector}
 		\mathbf{A}:=	\left( \begin{matrix}
 			a_1^{(1)}-a_2^{(1)}\\
 			a_1^{(2)}-a_2^{(2)}\\
 			\vdots          \\
 			a_1^{(N)}-a_2^{(N)}\\\end{matrix} \right).
 	\end{equation}
 	In order to show \eqref{claim equal coefficient in thm 1}, it suffices to show that the matrix $U$ in \eqref{matrix U} is non-singular a.e. in $\Omega$. 
 	
 	Via direct computations, we have 
 	\begin{align*}
 		\det \mathbf{U}=&\det \left( \begin{matrix}
 			u^{(1)} -u^{(0)} & \LC u^{(1)}\RC^2 -\LC u^{(0)} \RC^2 & \ldots & \LC u^{(1)} \RC^N -\LC u^{(0)} \RC^N\\
 			u^{(2)} -u^{(0)} & \LC u^{(2)}\RC^2 -\LC u^{(0)} \RC^2 & \ldots & \LC u^{(2)} \RC^N- \LC u^{(0)} \RC^N\\
 			\vdots&  \vdots &          \ddots        & \vdots  \\
 			u^{(N)} -u^{(0)} & \LC u^{(N)} \RC^2-\LC u^{(0)} \RC^2 & \ldots & \LC u^{(N)}\RC^N -\LC u^{(0)} \RC^N\end{matrix} \right) \\
 		=&\det \left( \begin{matrix}
 			1&    u^{(0)} & \LC u^{(0)} \RC^2 & \ldots & \LC u^{(0)} \RC^N\\	
 			0&	u^{(1)} -u^{(0)} & \LC u^{(1)}\RC^2 -\LC u^{(0)} \RC^2 & \ldots & \LC u^{(1)}\RC ^N -   \LC u^{(0)} \RC^N\\
 			0&	u^{(2)} -u^{(0)} & \LC u^{(2)} \RC^2 -\LC u^{(0)} \RC^2 & \ldots & \LC u^{(2)} \RC^N -\LC u^{(0)} \RC^N\\
 			\vdots&	\vdots&  \vdots &          \ddots        & \vdots  \\
 			0&	u^{(N)} -u^{(0)} & \LC u^{(N)} \RC^2 -\LC u^{(0)} \RC^2 & \ldots & \LC u^{(N)} \RC^N-\LC u^{(0)} \RC^N\end{matrix} \right) \\
 		=&\det \left( \begin{matrix}
 			1&    u^{(0)} & \LC u^{(0)} \RC^2 & \ldots & \LC u^{(0)} \RC^N\\	
 			1&	u^{(1)}  & \LC u^{(1)}\RC^2    & \ldots & \LC u^{(1)} \RC^N\\
 		   1&	u^{(2)} & \LC u^{(2)}  \RC^2 & \ldots & \LC u^{(2)}  \RC^N\\
 			\vdots&	\vdots&  \vdots &          \ddots        & \vdots  \\
 			1&	u^{(N)} & \LC u^{(N)}\RC^2 & \ldots & \LC u^{(N)} \RC^N\end{matrix} \right) ,
 	\end{align*}
 	which is the Vandermonde matrix in the last identity. Moreover, via the structure of the Vandermonde matrix, it is known that 
 	\begin{align*}
 		\det \mathbf{U} =\prod _{1\leq \ell<m\leq N} \LC u^{(m)}-u^{(\ell)}\RC .
 	\end{align*}
 	Now, since $u^{(m)}=f_m$ and $u^{(\ell)}=f_\ell$ in $\Omega_e$, with distinct exterior data, so we must have $u^{(m)}-u^{(\ell)}$ cannot be zero a.e. in $\Omega$, for all $m\neq \ell \in \{0,1,\ldots, N\}$, which shows $\det \mathbf{U}\neq 0$ a.e. in $\Omega$, so that $\mathbf{U}$ is non-singular a.e. in $\Omega$.

 	Therefore, one can conclude that the vector $\mathbf{A}$ in \eqref{A-vector} must be zero a.e. in $\Omega$. Finally, since each $a_j^{(k)}\in C^s(\overline{\Omega})$, for $j=1,2$, $k=1,2\ldots,N$, the claim \eqref{claim equal coefficient in thm 1} must hold. Finally, by using the equation \eqref{equation in the proof of thm 1}, we can summarize that $F_1=F_2$ in $\Omega$ as well, which proves the assertion.
 \end{proof}

 \begin{rmk}
 	Notice that in the both proof of Proposition \ref{Prop three measurements} and Theorem \ref{Main Thm 1}, we do not use any \emph{integral identities}, and one can determine unknowns by using finitely many measurements. These are major differences between local and nonlocal inverse problems.
 \end{rmk}

 \subsection{Successive linearization and proof of Theorem \ref{Main Thm 2}}
 Inspired by \cite{LLLS2019nonlinear,LLLS2019partial}, we want to adapt the higher order linearization method to show Theorem \ref{Main Thm 2}. let us consider the Dirichlet data
 \begin{align}\label{Dirichlet data for HOL}
 	f:=f(x;\eps):=\eps_1 g_1(x)+\eps_2g_2(x) \quad  \text{ in }\quad  \Omega_e,
 \end{align}
 where $g_k \in C^\infty_c(W_1)$, $\eps=(\eps_1, \eps_2)$, and $\eps_1,\eps_2$ are parameters such that every $|\eps_k|$ is  sufficiently small, for $k=1,2$. 
 
 Combining with \eqref{main equation fractional} and \eqref{Dirichlet data for HOL} at hand, we can apply the higher order linearization method to the semilinear elliptic equation
 \begin{align*}
 	\begin{cases}
 		(-\Delta)^s u +\mathbf{a}(x,u)=F & \text{ in }\Omega ,\\
 		u=\eps_1 g_1 +\eps_2g_2 & \text{ in } \Omega_e.
 	\end{cases}
 \end{align*}
 Since $\eps=(\eps_1,\eps_2)$, then $\eps=0$ is equivalent to $\eps_1 =\eps_2=0$.
 
 \begin{proof}[Proof of Theorem \ref{Main Thm 2}]
 	By using the special Dirichlet data \eqref{Dirichlet data for HOL}, let us denote $u_j$ to be the solutions of 
 	\begin{align}\label{semilinear in proof}
 		\begin{cases}
 			(-\Delta)^s u_j +\mathbf{a}_j(x,u_j)=F_j & \text{ in }\Omega ,\\
 			u_j=\eps_1g_1 +\eps_2g_2 & \text{ in } \Omega_e,
 		\end{cases}
 	\end{align}
 	for $j=1,2$.  Similar to the proof of Theorem \ref{Thm: Linear passive}, as $\eps=0$, let $u_j^{(0)}$ be the solution of 
 	\begin{align}\label{semilinear in proof with 0}
 		\begin{cases}
 			(-\Delta)^s u_j^{(0)} +\mathbf{a}_j(x,u_j^{(0)})=F_j & \text{ in }\Omega ,\\
 			u_j^{(0)}=0 & \text{ in } \Omega_e,
 		\end{cases}
 	\end{align}
 	for $j=1,2$. With the well-posedness at hand, we can apply the higher order linearization to show the unique determination result.

 	Let us first differentiate \eqref{semilinear in proof} with respect to $\eps_k$ around the function $u_j^{(0)}$, for $k=1,2$, then we have 
 	\begin{align}\label{equ: first order linear zero}
 		\begin{cases}
 			\LC   (-\Delta)^s +\p_y \mathbf{a}_j(x,u_j^{(0)}) \RC v_j^{(k)}=0 & \text{ in }\Omega,\\
 			v_j^{(k)} =f_k & \text{ on }\p \Omega,
 		\end{cases}
 	\end{align}
 	where 
 	\[
 	v_j^{(k)}=\left. \p_{\eps_k} \right|_{\eps=0}u_j,
 	\]
 	where $u_j^{(0)}$ is the solution of \eqref{semilinear in proof with 0}, for $j,k=1,2$. Due to the assumption of the coefficients $\mathbf{a}_j(x,y)$, one has that $\p_y \mathbf{a}_j(x,u_j^{(0)}(x))\in C^s(\overline{\Omega})$. Hence, via \cite[Theorem 1]{GRSU18} again, we are able to conclude that 
 	$$
 	A_1(x):=\p_y  \mathbf{a}_1(x,u_1^{(0)})=\p_y  \mathbf{a}_2(x,u_2^{(0)}), \text{ for }x\in \Omega.
 	$$
 	Furthermore, via the uniqueness for the (linear) fractional Schr\"odinger equation \eqref{equ: first order linear zero}, we can also conclude that
 	\begin{align*}
 		v^{(k)}:=v_1^{(k)}=v_2^{(k)} \text{ in }\R^n, 
 	\end{align*}
 	for $k=1,2$. On the other hand, notice that $v^{(k)} \in C^s(\overline{\Omega})$ due to the global H\"older estimate for the fractional Schr\"odinger equation \eqref{equ: first order linear zero}.

 	We next derive the second linearization of \eqref{semilinear in proof} around the function $u_j^{(0)}$. Via straight forward computations, it is not hard to see that 
 	\begin{align}\label{equ: second order linear zero}
 		\begin{cases}
 			\LC (-\Delta)^s+A_1 \RC w^{(2)}_j + \p^2_y \mathbf{a}_j(x,u_j^{(0)}) \LC v^{(1)} \RC^2=0 & \text{ in }\Omega,\\
 			w^{(2)}_j=0 &\text{ on }\p \Omega,
 		\end{cases}
 	\end{align}
 	for $j=1,2$, where 
 	$$
 	w^{(2)}_j=\left.\p^2 _{\eps_1} \right|_{\eps=0}u_j,
 	$$
 	and $u_j$ is the solution of \eqref{semilinear in proof}.
 	Via the condition \eqref{DN fractional 3}, acting $\left.\p_{\eps_1}^2\right|_{\eps=0}$ on the identity $(-\Delta)^s u_1=(-\Delta)^s u _2 $ in $W_2$, one can get 
 	\[
 	(-\Delta)^s w^{(2)}_1 = (-\Delta)^s w^{(2)}_2 \text{ in }W_2.
 	\]
 	We next multiply \eqref{equ: second order linear zero} by $v^{(2)}$, and integrate over $\Omega$, then an integration by parts formula implies that 
 	\begin{align}\label{second integral id}
 		\int_{\Omega} \LC  \p^2_y \mathbf{a}_1(x,u_1^{(0)}) - \p^2_y \mathbf{a}_2(x,u_2^{(0)})\RC \LC v^{(1)} \RC^2v^{(2)}\, dx=0.
 	\end{align}
 	Now, by the Runge approximation (Proposition \ref{Prop:(Runge-approximation-property)}), for any $\varphi\in L^2(\Omega)$, there exists a sequence of functions $\left\{  f_2^{(\ell)} \right\}_{\ell=1}^\infty$ such that $v^{(2)}_\ell \to \varphi$ in $L^2(\Omega)$ as $\ell \to \infty$, where $v^2_{\ell}$ is the solution of 
 	\begin{align*}
 		\begin{cases}
 			\LC (-\Delta)^s +A_1\RC v^{(2)}_\ell =0  & \text{ in }\Omega, \\
 			v^{(2)}_\ell=f_2^{(\ell)} &\text{ in }\Omega_e,
 		\end{cases}
 	\end{align*}
 	for $\ell\in \N$. Replace $v^{(2)}$ by $v^{(2)}_\ell$ in \eqref{second integral id} and take $\ell\to\infty$, we then derive 
 	\[
 	\LC  \p^2_y \mathbf{a}_1(x,u_1^{(0)}) - \p^2_y \mathbf{a}_2(x,u_2^{(0)})\RC \LC v^{(1)} \RC^2=0 \text{ in }\Omega.
 	\]
 	By choosing the exterior data $f_1\geq 0 $ in $W_1$ but not identically zero, one can summarize that 
 	\[
 	\p^2_y \mathbf{a}_1(x,u_1^{(0)}) = \p^2_y \mathbf{a}_2(x,u_2^{(0)}) \text{ in }\Omega
 	\]
 	as desired. 
 	
 	Meanwhile, by using the mathematical induction hypothesis, it is not hard to show that 
 	\[
 	\p^k_y \mathbf{a}_1(x,u_1^{(0)})=\p^k_y \mathbf{a}_2(x,u_1^{(0)}) \text{ in }\Omega, 
 	\]
 	for all $k\geq 3$. The rest of the proof is similar to the one for \cite[Theorem 1.1]{LL2020inverse}.
 	Moreover, combining with the analytic assumption for the nonlinearity $\mathbf{a}_j(x,y)$ for $j=1,2$, one can conclude that 
 	\[
 	\mathbf{a}_1(x,y)=\mathbf{a}_2(x,y) \text{ in }\Omega \times \R.
 	\]
 	Finally, via the condition \eqref{DN fractional 3} again, we must have $u_1 \equiv u_2$ in $\R^n$ such that 
 	\[
 	F_1= (-\Delta)^s u_1 +\mathbf{a}_1(x,u_1)=(-\Delta)^s u_2 +\mathbf{a}_2(x,u_2) =F_2 \text{ in }\Omega.
 	\]
 	This proves the assertion.
 \end{proof}

 \begin{rmk}
 	It might be interesting to consider the nonlocal variable elliptic operator $\mathcal{L}^s=(-\nabla \cdot (\sigma \nabla))^s$ to replace the fractional Laplacian $(-\Delta)^s$ in Theorem \ref{Main Thm 1} and \ref{Main Thm 2}, when $\mathbf{a}(x,u)$ is not linear with respect to $u$. However, due to the lack of suitable H\"older estimates for the linear nonlocal elliptic equation $\LC \mathcal{L}^s +a^{(1)}\RC u=F$ in $\Omega$, $u=0$ in $\Omega_e$, we do not know how to prove the local well-posedness for $\mathcal{L}^s u+\mathbf{a}(x,u)=0$ in $\Omega$. Hence, we are not able to use $\mathcal{L}^s$ to replace $(-\Delta)^s$ under the nonlinear setting.
 \end{rmk}

 \section{Unique determination with reduced unknowns}\label{Sec 5}

 In this section, let us  show a unique determination result for a special case of \eqref{main equation fractional}, when 
 \begin{align}\label{qudratic}
 	\mathbf{a}(x,u)=a^{(2)}u^2,
 \end{align} 
 where $a^{(2)}\in C^s(\overline{\Omega})$.  That is, we give the proof of Theorem \ref{Thm: qudratic}.
 When $\mathbf{a}(x,y)$ is of the form \eqref{qudratic}, it automatically satisfies \eqref{condition a polynomial}. We are interested to determine $a^{(2)}$ and $F$ for a fractional semilinear elliptic equation 
 \begin{align}\label{semi quadratic}
 	\begin{cases}
 		(-\Delta)^s u +a^{(2)}u^2=F &\text{ in }\Omega, \\
 		u=f&\text{ in }\Omega_e.
 	\end{cases}
 \end{align}
 By using preceding results, it is known that we can use three measurements to determine both $a^{(2)}$ and $F$ in \eqref{semi quadratic}. As a matter of fact, we are able to reduce the number of measurements. Before doing so, let us state an interesting result, which can be regarded as a comparison principle for  fractional semilinear elliptic equations. To our best knowledge, this result is new to the literature.

 \begin{prop}[Comparison principle]\label{Thm Comparison}
 	Let $\Omega \subset \R^n$ be a $C^{1,1}$ domain, for $n\geq 1$, and $s\in (0,1)$. Let $\mathbf{b}(x,u)\geq 0$ be a bounded function, for $(x,y)\in \Omega \times \R$. If $u\in H^s(\R^n)$ is a weak solution of 
 	\begin{align}\label{eq comparison}
 		\begin{cases}
 			(-\Delta)^s u +\mathbf{b}(x,u)=F &\text{ in }\Omega ,\\
 			u=f&\text{ in }\Omega_e.
 		\end{cases}
 	\end{align}
 	Suppose that $F\leq 0$ in $\Omega$ and $f\leq 0$ in $\Omega_e$ are bounded functions. Then $u\leq 0$ in $\Omega$.
 \end{prop}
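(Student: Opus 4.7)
The plan is to establish the maximum-principle-style bound by testing the weak formulation of \eqref{eq comparison} against the positive part $u^+:=\max(u,0)$, and then exploiting the well-known monotonicity of the fractional Dirichlet form under truncation. Since $f\leq 0$ in $\Omega_e$, we have $u^+=0$ in $\Omega_e$, which together with $u\in H^s(\R^n)$ yields $u^+\in \widetilde H^s(\Omega)$ (the positive part preserves the fractional Sobolev regularity, and zero exterior data for $u^+$ is inherited from the sign of $f$). Hence $u^+$ is an admissible test function.

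Next I would write the weak form of \eqref{eq comparison} with $v=u^+$:
\[
\frac{c_{n,s}}{2}\int_{\R^n}\int_{\R^n}\frac{(u(x)-u(y))(u^+(x)-u^+(y))}{|x-y|^{n+2s}}\,dx\,dy + \int_\Omega \mathbf{b}(x,u)\,u^+\,dx = \int_\Omega F\,u^+\,dx.
\]
The key pointwise estimate is the inequality
\[
(u(x)-u(y))(u^+(x)-u^+(y)) \geq (u^+(x)-u^+(y))^2,
\]
which follows by decomposing $u=u^+-u^-$ and noting that $u^+u^-=0$ pointwise, so the cross term $-(u^-(x)-u^-(y))(u^+(x)-u^+(y)) = u^-(x)u^+(y)+u^-(y)u^+(x)\geq 0$. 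Integrating this inequality against the fractional kernel shows that the double integral on the left is at least $\|(-\Delta)^{s/2}u^+\|_{L^2(\R^n)}^2\geq 0$.

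On the other hand, the right-hand side satisfies $\int_\Omega F\,u^+\,dx\leq 0$ because $F\leq 0$ and $u^+\geq 0$, while the second term on the left is nonnegative since $\mathbf{b}(x,u)\geq 0$ everywhere and $u^+\geq 0$. Combining these bounds gives
\[
0\leq \|(-\Delta)^{s/2}u^+\|_{L^2(\R^n)}^2 \leq \int_\Omega\LC F-\mathbf{b}(x,u)\RC u^+\,dx\leq 0.
\]
Therefore $(-\Delta)^{s/2}u^+\equiv 0$, so $u^+$ is constant in $\R^n$; combined with $u^+=0$ on the positive-measure set $\Omega_e$, this forces $u^+\equiv 0$, i.e.\ $u\leq 0$ in $\Omega$, as desired.

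The only delicate point, and the step I expect to require the most care, is the justification that $u^+\in \widetilde H^s(\Omega)$ so that it is a legitimate test function: boundedness of $f$ and $F$ together with the well-posedness results of Section~\ref{Sec 2} give enough regularity of $u$ for Stampacchia-type truncation to preserve the $H^s$ regularity, and the sign condition on $f$ yields the vanishing exterior trace. Once this is in place, the remainder of the argument is a direct computation with the Gagliardo kernel and the sign hypotheses on $\mathbf{b}$ and $F$.
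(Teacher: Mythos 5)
Your proof is correct, and it follows the same basic strategy as the paper (test the weak formulation against $\varphi=u^{+}$, which lies in $\wt H^{s}(\Omega)$ because $f\leq 0$ forces $u^{+}=0$ in $\Omega_e$, and exploit the signs of $\mathbf{b}$ and $F$). Where you diverge is in how the argument is closed, and your route is cleaner. The paper splits the Gagliardo double integral into a piece $I$ over $\Omega\times\Omega$ and a piece $II$ over $\Omega\times\Omega_e$, argues by contradiction, and therefore needs the \emph{strict} inequality $I>0$; this in turn requires a separate digression showing that a nonzero $u^{+}$ cannot be constant on $\Omega$ (via the pointwise formula for $(-\Delta)^{s}$ and the sign of $f$). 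You instead use the single global pointwise inequality $(u(x)-u(y))(u^{+}(x)-u^{+}(y))\geq (u^{+}(x)-u^{+}(y))^{2}$ on all of $\R^{2n}$, which subsumes both $I$ and $II$, and you only need the resulting \emph{non-strict} chain $0\leq \|(-\Delta)^{s/2}u^{+}\|_{L^2}^2\leq \int_\Omega (F-\mathbf{b}(x,u))u^{+}\,dx\leq 0$. This forces the Gagliardo seminorm of $u^{+}$ to vanish, hence $u^{+}$ is a.e.\ constant on $\R^n$, and since $u^{+}=0$ on the positive-measure set $\Omega_e$ (or simply because a nonzero constant is not in $L^2(\R^n)$) you get $u^{+}\equiv 0$ directly, with no contradiction argument and no need to rule out constants separately. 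The only point to tighten is the admissibility of $u^{+}$ as a test function: you should simply record that $|u^{+}(x)-u^{+}(y)|\leq |u(x)-u(y)|$ gives $u^{+}\in H^{s}(\R^n)$ with $\supp(u^{+})\subset\overline{\Omega}$, and that $H^{s}_{\overline{\Omega}}(\R^n)=\wt H^{s}(\Omega)$ for the Lipschitz domain $\Omega$; no Stampacchia-type regularity beyond this is needed.
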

 
 Note that the function $\mathbf{b}(x,u)$ may not need to satisfy the condition \eqref{condition a polynomial}, but the positivity of $\mathbf{b}(x,u)$ plays an essential role in the upcoming arguments.
 Recall that a solution $u$ is called a weak solution provided that it satisfies the weak formulation
 \begin{align}\label{weak formulation}
 	\int_{\R^n}(-\Delta)^{s/2} u \cdot (-\Delta)^{s/2}\varphi \, dx + \int_{\Omega} \mathbf{b}(x,u)\varphi \, dx =\int_{\Omega}F\varphi \, dx,
 \end{align}
 for any $\varphi \in \wt H^s(\Omega)$.
 
 \begin{proof}[Proof of Theorem \ref{Thm Comparison}]
 	Since $u$ is a weak solution of \eqref{eq comparison}, one can write it in terms of the weak formulation \eqref{weak formulation}, for any $\varphi \in \wt H^s(\Omega)$. Notice that the first term in \eqref{weak formulation} can be expressed as 
 	\begin{align*}
 		&\int_{\R^n}(-\Delta)^{s/2} u \cdot (-\Delta)^{s/2}\varphi \, dx \\
 		= & \iint_{\R^{2n}} \frac{(u(x)-u(y))(\varphi(x)-\varphi(y))}{|x-y|^{n+2s}} \, dydx\\
 		=&\iint_{\R^{2n}\setminus (\Omega_e\times \Omega_e)} \frac{(u(x)-u(y))(\varphi(x)-\varphi(y))}{|x-y|^{n+2s}} \, dydx,
 	\end{align*}
 	where we utilized $\varphi\equiv 0$ in $\Omega_e$.
 	By writing $u=u^+-u^- \in H^s(\R^n)$, where $u^+=\max \{u,0\}\in H^s(\R^n)$ and $u^-=\max\{-u,0\}\in H^s(\R^n)$. Taking the test function $\varphi=u^+$, one can see that $\varphi\equiv0$ in $\Omega_e$ directly. We further assume that $u^+$ is not identically zero, which will leads a contradiction via the following arguments.
 	
 	First, by inserting $\varphi=u^+$, we write 
 	\begin{align*}
 		\iint_{\R^{2n}\setminus (\Omega_e\times \Omega_e)} \frac{(u(x)-u(y))\LC u^+(x)-u^+(y)\RC }{|x-y|^{n+2s}} \, dydx=I+II,
 	\end{align*}
 	where 
 	\begin{align*}
 		I:=&\iint_{\Omega \times \Omega} \frac{(u(x)-u(y))\LC u^+(x)-u^+(y)\RC }{|x-y|^{n+2s}} \, dydx,\\
 		II:=&2\int_{\Omega}\int_{\Omega_e} \frac{(u(x)-u(y))\LC u^+(x)-u^+(y)\RC }{|x-y|^{n+2s}} \, dydx.
 	\end{align*}
 	For $I$, we observe that $\LC u^-(x)-u^-(y)\RC \LC u^+(x)-u^+(y)\RC\leq 0$, so 
 	\begin{align}\label{estimate I}
 		I \geq \iint_{\Omega \times \Omega} \frac{ \LC u^+(x)-u^+(y)\RC^2 }{|x-y|^{n+2s}} \, dydx > 0.
 	\end{align}
 	The last equality of \eqref{estimate I} cannot hold since $u^+$ cannot be a constant function. If $u^+$ is a constant, say $u^+=\alpha_0>0$ in $\Omega$, via the definition \eqref{fractional Laplacian} of the fractional Laplacian, one has that 
 	\begin{align*}
 		(-\Delta)^s u(x)=& c_{n,s} \mathrm{P.V.}\int_{\R^n}\frac{u(x)-u(y)}{|x-y|^{n+2s}}\, dy \\
 		=& c_{n,s}\int_{\Omega_e}\frac{\alpha_0-f(y)}{|x-y|^{n+2s}}\, dy>0,
 	\end{align*}
 	for $x\in \Omega$, where we used $u|_{\Omega_e}=f\leq 0$. Hence, via \eqref{eq comparison}, we have 
 	\[
 	0<(-\Delta)^s u +\mathbf{b}(x,u)=F\leq 0 \text{ in }\Omega,
 	\]
 	which contradicts to that $u^+=\alpha_0$ is a constant in $\Omega$. Thus, $u^+$ cannot be a constant in $\Omega$, so that \eqref{estimate I} must hold.
 	
 	Second, since $f\leq 0$ in $\Omega_e$, and 
 	$$
 	u(x)u^+(x)=\LC u^+(x)-u_-(x)\RC u^+(x)\geq 0 \text{ in } \Omega,
 	$$ 
 	then $II$ can be estimated by 
 	\begin{align*}
 		II= &2\int_{\Omega}\int_{\Omega_e} \frac{(u(x)-u(y))\LC u^+(x)-u^+(y)\RC }{|x-y|^{n+2s}} \, dydx \\
 		=& 2\int_{\Omega}\int_{\Omega_e} \frac{(u(x)-f(y))u^+(x)}{|x-y|^{n+2s}} \, dydx \\
 		\geq &0.
 	\end{align*}
 	Hence, 
 	\[
 	\iint_{\R^{2n}\setminus (\Omega_e\times \Omega_e)} \frac{(u(x)-u(y))\LC u^+(x)-u^+(y)\RC }{|x-y|^{n+2s}} \, dydx=I+II>0.
 	\]
 	Moreover, it is not hard to check that $\int_{\Omega} \mathbf{b}(x,u)u^+\, dx\geq 0$ since $\mathbf{b}(x,u)\geq 0$, and $\int_{\Omega} Fu^+ \, dx \leq 0$ since $F\leq 0$. With these relations at hand, However, by \eqref{weak formulation}, we then conclude that 
 	\begin{align*}
 		0<\int_{\R^n}(-\Delta)^{s/2} u \cdot (-\Delta)^{s/2}\varphi \, dx + \int_{\Omega} \mathbf{b}(x,u)\varphi \, dx =\int_{\Omega}F\varphi \, dx\leq 0,
 	\end{align*}
 	which leads to a contradiction. Therefore, $u^+$ must be identically zero, and this completes the proof.
 \end{proof}
 
 We are ready to prove Theorem \ref{Thm: qudratic}.

 \begin{proof}[Proof of Theorem \ref{Thm: qudratic}]
 	Similar as before, let us denote $u_j$ and $u_j^{(0)}$ to be the solutions of \eqref{equation in thm 3} and 	
 	\begin{align}\label{equation 0 in proof thm 4}
 		\begin{cases}
 			(-\Delta)^s u_j^{(0)}+a_j^{(2)} (u_j^{(0)})^2= F_j & \text{ in }\Omega,\\
 			u_j^{(0)}=0 &\text{ in } \Omega_e,
 		\end{cases}
 	\end{align}
 	for $j=1,2$. Via the condition \eqref{DN fractional 3}, it is not hard to see that 
 	\begin{align}\label{equal condition}
 		u:=u_1=u_2 \quad \text{ and }\quad u_0:=u_1^{(0)}=u_2^{(0)} \text{ in }\R^n.
 	\end{align}
 	Therefore, by subtracting \eqref{equation in thm 3} and \eqref{equation 0 in proof thm 4}, we have 
 	\begin{align}\label{equal condition 1}
 		\begin{cases}
 			(-\Delta)^s (u-u_0) + a_1^{(2)}\LC u^2-(u_0)^2 \RC =0 & \text{ in }\Omega,\\
 			(-\Delta)^s (u-u_0) +a_2^{(2)} \LC u^2 -(u_0)^2 \RC =0 & \text{ in }\Omega.
 		\end{cases}
 	\end{align}
 	Subtracting the above two identities yield that 
 	\begin{align}\label{some id in proof 4}
 		\LC a_1^{(2)}-a_2^{(2)} \RC \LC u+u_0 \RC \LC u-u_0\RC =0 \quad \text{ in }\Omega,
 	\end{align}
 	It remains to analyze \eqref{some id in proof 4}.
 	
 	We first claim that 
 	\begin{align}\label{nonzero 1}
 		u+u_0 \neq 0 \text{ in }\Omega.
 	\end{align}
 	Note that we have conditions that $q_j\geq0$, $F_j \leq 0$ in $\Omega$, for $j=1,2$ and $f\leq 0$ in $\Omega_e$. By considering $\mathbf{b}(x,u_j)$ in Theorem \ref{Thm Comparison} as $\mathbf{b}(x,u_j)=a_j^{(2)}u_j^2\geq 0$, and apply the conclusion of Theorem \ref{Thm Comparison}, then we have that the solutions $u$ and $u_0$ must be nonpositive. Moreover, $u$ and $u_0$ are not identically zero, if $a_j^{(2)}$, $F_j$ and $f$ are not identically zero. By $u,u_0\leq 0$, the claim \eqref{nonzero 1} holds.

 	We next claim 
 	\begin{align}\label{nonzero 2}
 		u-u_0\neq0 \text{ in }\Omega 
 	\end{align}
 	as well.
 	If $u-u_0=0$ in $\Omega$, the equation \eqref{equal condition 1} yields that $(-\Delta)^s (u-u_0)=0$ in $\Omega$, then the strong unique continuation (Proposition \ref{Prop:(Uniqueness-theorem)}) implies that $u\equiv u_0$ in $\R^n$. However, 
 	$$
 	u|_{\Omega_e}=f\neq 0=u_0|_{\Omega_e},
 	$$
 	which leads a contradiction. Thus, \eqref{nonzero 2} holds. 
 	
 	Finally, with \eqref{nonzero 1} and \eqref{nonzero 2} at hand, combining with \eqref{some id in proof 4}, we have $q_1-q_2=0$ or $q_1=q_2$ in $\Omega$. Therefore,
 	\[
 	F_1=(-\Delta)^s u +a_1^{(2)} u^2 =(-\Delta)^s u +a_2^{(2)}u^2=F_2 \text{ in }\Omega,
 	\]
 	which proves the theorem.
 \end{proof}

%
%
%
%
%
%
%

\bigskip

\noindent\textbf{Acknowledgment.} 
Y.-H. Lin is partially supported by the Ministry of Science and Technology Taiwan, under the project 111-2628-M-A49-002. H Liu is supported by the Hong Kong RGC General Research Fund (projects 12302919, 12301218 and 11300821) and the NSFC/RGC Joint Research Grant (project N\_CityU101/21).

\bibliographystyle{alpha}
\bibliography{ref}

\end{document}